\documentclass[11pt]{article}
\usepackage{amsthm, amssymb, srcltx}
\usepackage{soul, color}
\usepackage[]{amsmath}
\usepackage[]{amsfonts}
\usepackage[]{fancyhdr}
\usepackage[]{graphicx}
\graphicspath{{/EPSF/}{../figures/}{figures/}}

\makeatletter

\@addtoreset{equation}{section}
\makeatother

\newtheorem{theorem}{Theorem}[section]
\newtheorem{lemma}[theorem]{Lemma}
\newtheorem{proposition}[theorem]{Proposition}

\newtheorem{remark}[theorem]{Remark}
\newtheorem{hypothesis}[theorem]{Hypothesis}

\def \Rm {\mathbb{R}}

\def\pa{\partial}

\def\C{\mathcal{C}}

\def\L{\mathcal{L}}

\def\diam{\Delta}

\newcommand{\pdr}[2]{\dfrac{\partial{#1}}{\partial{#2}}}
\newcommand{\pdrr}[2]{\dfrac{\partial^2{#1}}{\partial{#2}^2}}

\newcommand{\pdrt}[3]{\dfrac{\partial^2{#1}}{\partial{#2}{\partial{#3}}}}

\newcommand{\yperp}{y\cdot\hat\theta^\perp}
\newcommand{\xiperp}{\xi\cdot\hat\theta^\perp}
\newcommand{\where}{\quad\text{ where }}
\newcommand{\qandq}{\quad\text{ and }\quad}
\newcommand{\qhenceq}{\quad\text{ hence }\quad}

\newcommand{\cout}[1]{}

\newcommand{\mC}{\mathcal{C}}
\newcommand{\mF}{\mathcal{F}}

\newcommand{\mK}{\mathcal{K}}
\newcommand{\mL}{\mathcal{L}}

\newcommand{\mZ}{\mathcal{Z}}

\newcommand{\mD}{\mathfrak D}

\newcommand{\sgn}[1]{\,{\rm sign}(#1)}

\newcommand{\olL}{{\overline{\|L\|}}}

\hoffset 0.0cm
\textwidth 16.0cm
\oddsidemargin 0.0cm
\evensidemargin 0.0cm

\def \nsphere{ {\mathbb{S}^{n-1}}}

\def \Sone{ {\mathbb{S}^1} }

\def \supp { {\mbox{supp}}}

\def \S { {\mathbb{S}}}

\def \bfx{ {\mathbf{x}} }

\title{Inverse transport with isotropic time-harmonic sources}

\author{Guillaume Bal\thanks{gb2030@columbia.edu} \and Fran\c cois Monard\thanks{fm2234@columbia.edu \newline Department of Applied Physics and Applied Mathematics, Columbia University, New York NY, 10027}}

\begin{document}
\maketitle

\begin{abstract}
  This paper concerns the reconstruction of the scattering coefficient in a two-dimensional transport equation from angularly averaged measurements when the probing source is isotropic and time-harmonic. This is a practical setting in the medical imaging modality called Optical Tomography. As the modulation frequency of the source increases, we show that the reconstruction of the scattering coefficient improves. More precisely, as the frequency $\omega$ increases, we show that all frequencies of the scattering coefficient lower than $b$ are reconstructed stably with an accuracy that improves as $\omega$ increases and $b$ decreases. The proofs are based on  an analysis of the single scattering singularities of the transport equation and on careful analyses of oscillatory integrals by stationary phase arguments.
\end{abstract} 

\section{Introduction}

In the theory of stationary inverse transport, it was shown in \cite{BLM} that, with knowledge of the attenuation coefficient, the problem of reconstructing the spatial part of the scattering coefficient $k(x)$ from isotropic sources and angularly averaged measurements was severely ill-posed. This was due to the fact that the measurements were not capturing enough singularities of the transport solution because of angular averaging. 
In the present paper, we consider a more favorable measurement setting where the (still isotropic) source term is modulated in frequency with a sufficiently high modulation frequency. For sufficiently high frequencies, this corresponds to measurements of the time-dependent transport equation at very short time sampling (theory for this can be found in \cite{BJ}). In either case, asymptotic expansions (for large modulation frequencies or small times) of the single scattering component of the measurements show that in the leading order, and in the case where the domain of interest is a ball, we recover a weighted X-Ray transform of the scattering coefficient \cite{BJ, BJLM}, while the remainder term in the measurements is of lower-order. The practical interest of using time-harmonic sources in transport for improving the reconstruction of optical parameters was first pointed out numerically in \cite{RBH} in the context of least-squares based reconstructions, where the authors observed that cross-talk between reconstructed optical coefficients was reduced and accuracy of the reconstructions was improved as $\omega$ increased. One of the goals of the present paper is to provide theoretical justifications for these observations.

Although the remainder in the asymptotic expansion of forward transport solutions has small magnitude (see \cite{BJLM}), it can still be magnified during the inversion procedure because the inverse X-Ray transform is a deregularizing operator. In order to avoid this, we need to regularize the inversion procedure. We do so by giving up on the reconstruction of the high-frequency content of the unknown scattering coefficient $k$. Our main objective is to show which spatial frequencies of $k$ may be stably reconstructed and with which accuracy as a function of the modulation frequency $\omega$.

The task of the present paper is to devise a proper reconstruction algorithm for the scattering coefficient in this setting that is justified theoretically. The theory is based on stationary phase arguments, which allow us to exhibit decay of solutions (see also \cite{BJLM}) and errors in reconstructions in terms of the modulation frequency $\omega$. The same techniques are also used to guarantee the convergence of an iterative reconstruction scheme and in particular to prove that certain error operators become contractions when $\omega$ becomes large enough.

The results presented here show that in order to attain a given precision on the reconstruction, there is a trade-off between the modulation frequency $\omega$ of the measurements available, and the bandwidth $b$ (or fineness of details) at which one will recover the function $k$. Knowledge of measurements at a given frequency $\omega$ allow for a stable reconstruction of the scattering coefficient up to a maximal resolution $b$ that depends on this $\omega$ and the accuracy we wish to attain. Modulated sources at a given frequency corresponds to a spatial scale below which details of $k$ are not accessible. We make such statements precise. 

The rest of the paper is structured as follows. We state our main results in section \ref{sec:statement}. Section \ref{sec:forward} recalls the necessary elements of forward transport theory in a general setting, including the asymptotic behavior of angularly averaged solutions of time-harmonic transport for high modulation frequencies. Section \ref{sec:inversion} describes how to invert for the spatial part of scattering $k(x)$ in two dimensions of space and when the domain is a disk. Sections \ref{sec:proof1} and \ref{sec:proof2} provide proofs for the main results. 

\section{Statement of the main results}\label{sec:statement}
Let $r>0$ and let $B_r$ denote the open disk $\{ x_1^2 + x_2^2< r^2\}$ in $\Rm^2$ with boundary denoted by $\partial B_r$. We consider the two-dimensional time-harmonic transport equation on $B_r\times\Sone$:
\begin{align}
    \begin{split}
	v\cdot\nabla u + (\sigma(x) + i\omega) u &= k(x) \int_\Sone \phi(v',v) u(x,v')\ dv', \quad (x,v)\in X\times\Sone, \\
        u(x,v) &= \delta(x-x_0), \quad (x,v)\in\Gamma_{-},	
    \end{split}    
    \label{eq:thtransport2d}
\end{align}
where $\delta(x-x_0)$ is an isotropic point source located at $x_0$ and is defined for any test function $\psi$ on $\partial B_r$ by
\begin{math}
    \int_{\partial B_r} \delta(x-x_0) \psi(x)\ d\mu(x) = \psi(x_0).
\end{math}

In this paper, we consider {\em angularly averaged outgoing measurements} given by
\begin{align}
    T^\omega(x_0,x_c):= \int_{v\cdot\nu_{x_c}>0} u_{|\Gamma_+}(x_c,v) \,v\cdot\nu_{x_c}\ dv, \qquad u \text{ solves } \eqref{eq:thtransport2d}.
    \label{eq:measurements}
\end{align}
Such measurements are reasonable descriptions of the data acquired in practice in the medical imaging modality called Optical Tomography; see \cite{RBH} and references there. Because the source term in \eqref{eq:thtransport2d} is isotropic and the measurements are angularly averaged, the main singularities of the transport equation are not captured by such measurements. The reconstruction of the optical coefficients thus becomes a severely ill-posed problem \cite{B2,BLM}. One way to restore some well-posedness in the inversion is to consider the limit as $\omega\to\infty$. This is the problem considered in this paper following results obtained in \cite{BJLM} on the forward transport problem.

Under appropriate subcriticality conditions recalled in the next section, we obtain existence and uniqueness of the solution of \eqref{eq:thtransport2d} as well as the decomposition 
\begin{align}
    u(x,v) = J_\omega\delta_{x_0} + \mK_\omega J_{\omega}\delta_{x_0} + (I-\mK_\omega)^{-1} \mK_\omega^2 J_{\omega}\delta_{x_0},
    \label{eq:sol_decomp}
\end{align}
where the operators $J_\omega$ and $\mK_\omega$ are defined in \eqref{Jom} and \eqref{eq:Kom} below, respectively. This decomposition in turn yields a decomposition of the measurement operator \eqref{eq:measurements} into three components denoted by
\begin{align}
    T^\omega[\sigma,k](x_0,x_c):= T_0^\omega[\sigma](x_0,x_c) + T_1^\omega[\sigma,k](x_0,x_c) + T_{2+}^\omega[\sigma,k](x_0,x_c),
    \label{eq:meas_decomp}
\end{align}
where $T_0$ accounts for particles that traveled from $x_0$ to $x_c$ in a ballistic way and $T_1$ and $T_{2+}$ account for particles that were emitted at $x_0$, scattered once or multiple-times inside the domain, respectively, and were measured at $x_c$. Here and below, $x_0$ denotes the emitter's position and $x_c$, the detector's position.

In this paper, as in \cite{BLM}, we assume $\sigma$ to be known and focus on the reconstruction of the scattering coefficient $k(x)$. The ballistic part can be taken out of the measurements and we define our data to be
\begin{align}
    \mD^\omega[k](x_0, x_c) := T^\omega[\sigma,k](x_0, x_c) - T_0^\omega[\sigma](x_0, x_c) = T_1^\omega[k](x_0,x_c) + T_{2+}^\omega[k](x_0,x_c).
    \label{eq:data}
\end{align}
In the sequel, we address the reconstruction of the function $k$ from knowledge of the data \eqref{eq:data}.

Throughout the paper, we require the following crucial hypothesis, which expresses the fact that $k$ vanishes in the vicinity of $\partial X$:
\begin{hypothesis}\label{hyp:supportk}
    There exists $0<D<r$ such that $\supp\ k \subset B_{r-2D}$.
\end{hypothesis}
The reason is that the measurements we consider are overwhelmed by a term that depends only on the value of $k$ at the domain's boundary. In other words, to leading order, the measurements do not depend on the scattering coefficient $k$ away from the boundary. We thus assume here hypothesis \ref{hyp:supportk}, which in practice corresponds to placing the detectors away from the scattering domain of interest and thus does not seem to be overly restrictive.

Now, as is proved in \cite{BJLM} assuming a hypothesis equivalent to \ref{hyp:supportk}, the expression \eqref{eq:data} admits the following asymptotic decomposition for large $\omega$:
\begin{align}
    \begin{split}
	\mD^\omega(x_0, x_c) &= \widetilde{T_1^\omega}[k](x_0, x_c) + T_{1R}^\omega[k](x_0, x_c) + T_{2+}^\omega[k](x_0, x_c), \where \\[2mm]
	\widetilde{T_1^\omega}[k](x_0, x_c) &= \frac{1}{\sqrt{\omega}} A^\omega (x_0,x_c) \int_{[x_0,x_c]} k \rho, \qandq \| T_{1R}^\omega[k] + T_{2+}^\omega[k] \|_{L^\infty( (\partial X)^2)} \le \frac{C}{\omega},	
    \end{split}
    \label{eq:data_asym}    
\end{align}
and where $A^\omega$ is bounded away from zero independently of $\omega$ and $\rho(x) = (r^2-|x|^2)^{-\frac{1}{2}},\ x\in B_r$. In other words, the single scattering decomposes into a leading part $\widetilde{T_1^\omega}$ that is proportional to the X-Ray transform of $k\rho$ (and from which we would like to reconstruct all or part of $k$) and a remainder $T_{1R}^\omega$ that is asymptotically smaller. Equality \eqref{eq:data_asym} tells us that for large $\omega$, the remainder $T_{1R}^\omega + T_{2+}^\omega$ becomes negligible.

In order to setup an inversion, we first reparameterize the data into the parallel geometry of the X-Ray transform, namely we define $x_0\in\partial B_r$ and $x_c\in\partial B_r$ as functions of $(s,\theta)\in(-r,r)\times\Sone$, such that the line joining $x_0$ and $x_c$ is exactly 
\begin{align*}
    L(s,\theta) := \left\{ s\hat\theta^\perp + t\hat\theta, \quad t\in\Rm \right\},
\end{align*}
and rewritting equality \eqref{eq:data_asym} in this parametrization yields
\begin{align}
    \chi(s) \mD^\omega(s,\theta) = \frac{1}{\sqrt{\omega}} A^\omega (s,\theta) P\left[ \rho k \right](s,\theta) + \frac{1}{\omega} \chi(s) R^\omega(s,\theta),
    \label{eq:data_asym_radon}
\end{align}
where $P$ denotes the X-Ray transform in the parallel geometry, and $\chi$ is a smooth function satisfying $\chi(s) = 1$ if $|s|\le r-2D$ and $\chi(s)=0$ if $|s|\ge r-D$. As will be seen later, $\chi$ is necessary in order to smooth out effects at the boundary of the term $R^\omega$, while leaving the term $P\left[ \rho k \right]$ unchanged thanks to hypothesis \ref{hyp:supportk}.

Let us now set up an inverse for $\widetilde{T_1^\omega}[k]$. The first candidate for an inversion is 
\begin{align*}
    \widetilde{T_1^\omega}^{-1} = \sqrt{\omega} P^{-1} [(A^\omega)^{-1} \chi \cdot],
\end{align*}
where $P^{-1}$ denotes the inverse X-Ray transform. Such an inverse is unsuitable because $P^{-1}$ is a deregularizing operator that prevents us from controlling the error term $\widetilde{T_1^\omega}^{-1}\circ [T_{1R}^\omega + T_{2+}^\omega]$ in a suitable manner. We therefore introduce a regularized inverse X-Ray transform $P^{-1,b}$ with $b>0$, such that 
\begin{align}
    P^{-1,b}\circ  P [f] (x) = W_b\star f (x):= f_b(x), \where\quad W_b(x):= \frac{1}{2\pi} \int_{\Rm^2} e^{ix\cdot\xi} \hat\Phi \left( \frac{|\xi|}{b} \right)\ d\xi
    \label{eq:lowpass}
\end{align}
with $\hat\Phi:[0,\infty)\mapsto [0,1]$ a ``low-pass filter'' supported inside $[0,1]$. Hence, the parameter $\frac{1}{b}$ measures the finest scale at which we reconstruct $f$ when applying $P^{-1,b}$ to $P[f]$. The inverse operator that we apply to the data \eqref{eq:data} is therefore defined by
\begin{align}
    \widetilde{T_1^\omega}^{-1,b} := \sqrt{\omega} P^{-1,b}[(A^\omega)^{-1} \chi \cdot].
    \label{eq:inversion_operator}
\end{align}
As the operator \eqref{eq:inversion_operator} is applied to the data \eqref{eq:data}, we obtain an equation of the form
\begin{align}
    \widetilde{T_1^\omega}^{-1,b} \mD^\omega[k] = [k\rho]_b + R^{\omega,b}[k], 
    \label{eq:inversion_equation}
\end{align}
where the ``error'' term $R^{\omega,b}[k]$ is decomposed into $R^{\omega,b} = R_1^{\omega,b} + R_2^{\omega,b}$, with
\begin{align}
    R_1^{\omega,b} [k] &:= \widetilde{T_1^\omega}^{-1,b} T_{1R}^\omega[k] = \widetilde{T_1^\omega}^{-1,b} T_1^\omega[k] - [k\rho]_b, \label{eq:rem1} \\
    R_2^{\omega,b} [k] &:= \widetilde{T_1^\omega}^{-1,b} T_{2+}^\omega[k]. \label{eq:rem2}
\end{align}
Here, $R_1^{\omega,b}[k]$ represents the error due to the part of the single scattering that is not inverted for whereas $R_2^{\omega,b}[k]$ represents the error due to multiple scattering. 

Our first result controls the remainder $R^{\omega,b}[k]$ in $L^\infty(B_r)$ using the asymptotic decomposition for large $\omega$ of the measurements \eqref{eq:data_asym} as established in \cite{BJLM}:
\begin{theorem}[Direct inversion]\label{thm:straight_inversion}
    Let $\sigma\in\C^2(B_r)$ be known and assume that $k\in\C^2(B_r)$ and that the support hypothesis \ref{hyp:supportk} holds. Then for $\omega$ large enough and fixed $b$,  knowledge of the data $\mD^\omega(s,\theta)$ for $s\in[-r+D,r-D]$ and $\theta\in\Sone$ allows us to reconstruct $[\rho k]_b$ up to an error $R^{\omega,b}[k]$ \eqref{eq:inversion_equation} that is bounded in $L^\infty(B_r)$ by 
    \begin{align*}
	\|R^{\omega,b}[k]\|_{L^\infty} \le C\frac{b}{\sqrt{\omega}}.
    \end{align*}
    Here, the constant $C$ depends on $\|k\|_{\C^2}$, $\|\sigma\|_{\C^2}$, $D$ and $\|w_1\|_{L^1}$.
\end{theorem}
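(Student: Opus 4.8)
\medskip
\emph{Proof proposal.} The plan is to read the error operator $R^{\omega,b}[k]$ off the parallel-geometry identity \eqref{eq:data_asym_radon} and then to bound it by separately controlling the two ingredients of $\widetilde{T_1^\omega}^{-1,b}$ in \eqref{eq:inversion_operator}: the regularized inverse X-ray transform $P^{-1,b}$, whose $L^\infty$ operator norm I claim is $O(b)$, and the multiplication by $(A^\omega)^{-1}\chi$, which is $L^\infty$-bounded uniformly in $\omega$. Applying the operator \eqref{eq:inversion_operator} to $\mD^\omega$ and substituting \eqref{eq:data_asym_radon} for $\chi\mD^\omega$, multiplication by $(A^\omega)^{-1}$ turns the right-hand side into $\tfrac{1}{\sqrt\omega}P[\rho k]+\tfrac{1}{\omega}(A^\omega)^{-1}\chi R^\omega$ (by Hypothesis~\ref{hyp:supportk}, $P[\rho k]$ vanishes for $|s|\ge r-2D$, which is why no cutoff appears on it in \eqref{eq:data_asym_radon}), and then the prefactor $\sqrt\omega$ together with $P^{-1,b}$ produces $P^{-1,b}P[\rho k]=[\rho k]_b$ by \eqref{eq:lowpass}. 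Comparing with \eqref{eq:inversion_equation} this identifies
\begin{align*}
    R^{\omega,b}[k] \;=\; \frac{1}{\sqrt\omega}\,P^{-1,b}\!\big[(A^\omega)^{-1}\chi R^\omega\big]
    \;=\; \widetilde{T_1^\omega}^{-1,b}\big(T_{1R}^\omega[k]+T_{2+}^\omega[k]\big),
\end{align*}
consistent with \eqref{eq:rem1}--\eqref{eq:rem2}; the cutoff $\chi$ is also what makes this expression computable from the data on $s\in[-r+D,r-D]$ only, as in the statement.

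The core of the proof is the uniform-in-$b$ estimate
\begin{align*}
    \big\|P^{-1,b}g\big\|_{L^\infty(B_r)} \;\le\; C\,b\,\|w_1\|_{L^1}\,\|g\|_{L^\infty}.
\end{align*}
I would obtain it from the filtered-backprojection representation $P^{-1,b}g(x)=c\int_{\Sone}\big(w_b\star g(\cdot,\theta)\big)(x\cdot\hat\theta^\perp)\dtheta$, in which $w_b$ is the one-dimensional filter (ramp times low-pass) that makes $P^{-1,b}$ satisfy \eqref{eq:lowpass}; the dilation structure of $\hat\Phi(\cdot/b)$ forces $w_b(s)=b^2w_1(bs)$, hence $\|w_b\|_{L^1}=b\,\|w_1\|_{L^1}$ (here $w_1\in L^1$, which is why $\|w_1\|_{L^1}$ is allowed to appear in the final constant). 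Young's inequality for the convolution in $s$ then yields $\|w_b\star g(\cdot,\theta)\|_{L^\infty}\le b\,\|w_1\|_{L^1}\,\|g\|_{L^\infty}$ for each $\theta$, and the remaining integral over the compact circle $\Sone$ contributes only a universal constant, which gives the claim. (Along the way one also uses, via the forward theory of Section~\ref{sec:forward}, that passing the boundary data to the parallel geometry of $P$ is a bounded change of variables on the range $|s|\le r-D$, so that $L^\infty$ bounds are preserved.) This is the step I expect to be the main obstacle: the forward asymptotics \eqref{eq:data_asym} come ready-made from \cite{BJLM} and the rest is bookkeeping, whereas here one must pin down the exact $b$-dependence of the deregularizing operator $P^{-1,b}$ acting on $L^\infty$.

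Finally I would collect the pieces. The factor $(A^\omega)^{-1}$ is $L^\infty$-bounded uniformly in $\omega$ since $A^\omega$ is bounded away from zero independently of $\omega$, and $\chi$ is a fixed smooth cutoff; moreover $\|R^\omega\|_{L^\infty}\le C$ with $C=C(\|k\|_{\C^2},\|\sigma\|_{\C^2},D)$ by \eqref{eq:data_asym}, valid once $\omega$ is large enough for the decomposition \eqref{eq:sol_decomp} and the asymptotics \eqref{eq:data_asym} to apply. Combining this with the $P^{-1,b}$ bound,
\begin{align*}
    \big\|R^{\omega,b}[k]\big\|_{L^\infty(B_r)}
    \;\le\; \frac{1}{\sqrt\omega}\,\big(C\,b\,\|w_1\|_{L^1}\big)\,\big\|(A^\omega)^{-1}\big\|_{L^\infty}\,\|\chi\|_{L^\infty}\,\|R^\omega\|_{L^\infty}
    \;\le\; C'\,\frac{b}{\sqrt\omega},
\end{align*}
with $C'$ depending only on $\|k\|_{\C^2}$, $\|\sigma\|_{\C^2}$, $D$ and $\|w_1\|_{L^1}$, which is the asserted bound.
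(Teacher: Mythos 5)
Your proposal is correct and follows essentially the same route as the paper: identify $R^{\omega,b}[k]=\widetilde{T_1^\omega}^{-1,b}(T_{1R}^\omega+T_{2+}^\omega)$, bound $P^{-1,b}$ in $\mL(L^\infty)$ by $2\pi b\|w_1\|_{L^1}$ via the dilation identity $w_b(u)=b^2w_1(bu)$ and Young's inequality (this is exactly \eqref{eq:reg_FBP_estimate}), use the uniform-in-$\omega$ bound \eqref{eq:a_omega_radon_bound} on $(A^\omega)^{-1}$, and invoke the $O(\omega^{-1})$ remainder estimate of Theorem \ref{thm:dec2d}. The bookkeeping $\sqrt{\omega}\cdot b\cdot\omega^{-1}=b/\sqrt{\omega}$ and the dependence of the constant are as in the paper.
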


\begin{remark}\label{rmk:straight_inversion}
    Theorem \ref{thm:straight_inversion} merely states that in order to get details of size $\frac{1}{b}$ while keeping a constant error level, we must provide measurements at frequency $\omega \approx b^2$. Conversely, if measurements are available at a given frequency $\omega$, then theorem \ref{thm:straight_inversion} gives the increase in error as we try to recover finer details of $[\rho k]$.    
\end{remark}

In the second result, which is the main result of the paper, we assess the conditions under which the following iterative algorithm:
\begin{align}
    \begin{split}
	k_0 &= \widetilde{T_1^\omega}^{-1,b} \mD^\omega, \\
	k_{n+1} &= G(k_n) := \widetilde{T_1^\omega}^{-1,b} \mD^\omega - R^{\omega,b}[k_n], \quad n\ge 0,
    \end{split}    
    \label{eq:iterated_scheme}
\end{align}
improves the reconstruction of $[k\rho]_b$ for a given $b$.
The reconstruction in Theorem \ref{thm:straight_inversion} treats multiple scattering as an unknown error term. However, once an approximation of $k$ is obtained, we can estimate the multiple scattering contribution to the data and remove it from the data is order to obtain a better approximation of the single scattering contribution on which the reconstruction algorithm is based. Improvements are then obtained iteratively as described in \eqref{eq:iterated_scheme}. 

Our objective is to show the improvements resulting from using such an iterative scheme and to obtain conditions that guaranty its convergence. In particular, we must find a suitable space for $k$ and a $(b,\omega)$-regime where the operator $G$ is a $c_1$-contraction for $0<c_1<1$. This in turn requires a thorough and somewhat surprisingly complicated analysis of the operators $R_1^{\omega,b}$ \eqref{eq:rem1} and $R_2^{\omega,b}$ \eqref{eq:rem2} that  includes a careful analysis of oscillatory integrals by the method of stationary phase with errors that depend on the function $k$ but not on its derivatives. 

The following theorem states that such a regime exists and details the accuracy that is reached in this case. In particular, we emphasize the fact that no regularity conditions are imposed on $k$.
\begin{theorem}[Iterative improvement]\label{thm:iterative}
    Assume that $\sigma\in\C^2(B_r)$ is known and that $k$ satisfies hypothesis \ref{hyp:supportk}. Suppose further that the function $W_1$ in \eqref{eq:lowpass} satisfies $\|W_1\|_{L^1(\Rm^2)}<\infty$. Then for fixed $b>0$, there exists $K_1>0$ and $\omega_0>1$ such that for $\omega\ge\omega_0$ and $k$ such that $\|k\rho\|_\infty\le K_1$, the iterative scheme \eqref{eq:iterated_scheme} converges to $k^\star \in B_{K_0}(L^\infty(B_{r-2D}))$. Moreover $k^\star$ satisfies the error estimate
    \begin{align}
	\|k^{\star} - [k\rho]_b\|_\infty \le \frac{c_1}{1-c_1} \|[k\rho]-[k\rho]_b \|_\infty,	
	\label{eq:iterative_error_estimate}
    \end{align}
    where $c_1\in (0,1)$.
\end{theorem}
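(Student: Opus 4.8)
The plan is to recast the iterative scheme \eqref{eq:iterated_scheme} as a fixed-point problem $k = G(k)$ on a suitable complete metric space and to apply the Banach fixed-point theorem. The natural candidate space is a closed ball $B_{K_0}(L^\infty(B_{r-2D}))$ in $L^\infty$, which is complete; note $G$ involves $\mD^\omega$ (fixed data) and the error operator $R^{\omega,b}$ acting on the current iterate, so $G$ is affine in $k$ up to the nonlinear dependence hidden inside $R_2^{\omega,b}$ (multiple scattering is not linear in $k$). Two properties must be established: (i) $G$ maps the ball $B_{K_0}$ into itself, and (ii) $G$ is a $c_1$-contraction on that ball with $c_1 \in (0,1)$. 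Granting these, Banach's theorem gives a unique fixed point $k^\star \in B_{K_0}$, and the error estimate \eqref{eq:iterative_error_estimate} follows from the standard a posteriori bound: since $k^\star = G(k^\star)$ and the ``ideal'' target satisfies $[k\rho]_b = \widetilde{T_1^\omega}^{-1,b}\mD^\omega - R^{\omega,b}[k]$ by \eqref{eq:inversion_equation} evaluated at the true $k$, subtracting yields $k^\star - [k\rho]_b = R^{\omega,b}[k] - R^{\omega,b}[k^\star]$; writing $R^{\omega,b}[k] = (R^{\omega,b}[k] - R^{\omega,b}[[k\rho]_b])+ R^{\omega,b}[[k\rho]_b]$ and iterating, or more directly estimating $\|k^\star - [k\rho]_b\| \le \|R^{\omega,b}[k^\star]-R^{\omega,b}[k]\|$ together with a Lipschitz bound on $R^{\omega,b}$ with constant $c_1$, one arrives at $\|k^\star-[k\rho]_b\| \le \frac{c_1}{1-c_1}\|[k\rho]-[k\rho]_b\|$.

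The analytic heart of the argument is therefore proving that $R^{\omega,b} = R_1^{\omega,b} + R_2^{\omega,b}$ is Lipschitz on $B_{K_0}$ with a constant $c_1<1$ when $\omega \ge \omega_0$ is large and when the radius $K_1$ constraining $\|k\rho\|_\infty$ is small. For $R_1^{\omega,b}$, recall from \eqref{eq:rem1} that $R_1^{\omega,b}[k] = \widetilde{T_1^\omega}^{-1,b}T_{1R}^\omega[k]$, and from the asymptotic decomposition \eqref{eq:data_asym} that $T_{1R}^\omega[k]$ is the sub-leading part of the single scattering, smaller than the leading term by a factor $\omega^{-1/2}$. The operator $\widetilde{T_1^\omega}^{-1,b}$ carries a compensating factor $\sqrt{\omega}$ and a low-pass filter $P^{-1,b}$ whose kernel is controlled by $\|W_1\|_{L^1}$ and by $b$; combined with the $\omega^{-1}$ bound on $\|T_{1R}^\omega[k]+T_{2+}^\omega[k]\|_{L^\infty}$ from \eqref{eq:data_asym}, one expects $\|R_1^{\omega,b}[k]\| \lesssim \frac{b}{\sqrt\omega}\|k\rho\|$ type estimates — consistent with Theorem \ref{thm:straight_inversion} — but now one needs the estimate in \emph{Lipschitz} form and \emph{without} using derivatives of $k$. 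This is precisely where the ``careful analysis of oscillatory integrals by the method of stationary phase with errors that depend on $k$ but not on its derivatives'' announced in the text must be carried out: the stationary-phase expansion of the single-scattering integral must be performed so that the remainder is estimated by $\|k\|_{L^\infty}$ (or $\|k\rho\|_\infty$) alone, via integration over the transverse variable rather than integration by parts in a direction where $k$ sits. I expect this to be the main obstacle.

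For $R_2^{\omega,b}$, one uses \eqref{eq:rem2} and the multiple-scattering representation $(I-\mK_\omega)^{-1}\mK_\omega^2 J_\omega \delta_{x_0}$ from \eqref{eq:sol_decomp}. The subcriticality condition (recalled in Section \ref{sec:forward}) ensures $\|\mK_\omega\|<1$ so that $(I-\mK_\omega)^{-1}$ is bounded; each factor $\mK_\omega$ carries a factor proportional to $k$, so $\mK_\omega^2 J_\omega$ is quadratic in $k$ and hence $R_2^{\omega,b}$ is at least quadratic, giving a Lipschitz constant on $B_{K_0}$ proportional to $K_0\|k\rho\|_\infty$ plus the gain from $\omega^{-1}$; choosing $K_1$ small and $\omega_0$ large makes this contribution small. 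One also needs the stationary-phase decay in $\omega$ to survive composition with $\widetilde{T_1^\omega}^{-1,b}$, i.e. that the $\sqrt\omega$ amplification from the inverse operator does not destroy the $\omega^{-1}$ decay of $T_{2+}^\omega$; this is the same mechanism as for $R_1^{\omega,b}$ and again requires derivative-free stationary-phase estimates so that the bound does not blow up as $b$ grows or as the regularity of $k$ degrades.

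Assembling the pieces: first fix $b$; then, using the $\omega^{-1}$ and $\omega^{-1/2}$ decays together with the $\sqrt\omega$ amplification and the $L^1$-norm of $W_1$, produce constants so that the Lipschitz constant of $R^{\omega,b}$ on $B_{K_0}$ is bounded by, say, $C(b)\big(\tfrac{1}{\sqrt\omega} + K_0\big)$; choose $K_1$ (hence $K_0$) small enough and $\omega_0$ large enough that this is $\le c_1<1$; verify the self-mapping property of $G$ on $B_{K_0}$ using $\|G(0)\| = \|\widetilde{T_1^\omega}^{-1,b}\mD^\omega\| \lesssim \|k\rho\|_b + $ error $\le K_0$ when $K_1$ is small; invoke Banach; and finally derive \eqref{eq:iterative_error_estimate} from the a posteriori contraction estimate as described above. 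The statement that no regularity is imposed on $k$ is exactly the payoff of the derivative-free stationary-phase remainders, and keeping track of that feature throughout is what makes the analysis ``somewhat surprisingly complicated''.
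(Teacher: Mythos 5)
Your fixed-point skeleton is exactly the paper's: cast \eqref{eq:iterated_scheme} as $k=G(k)$ on a ball $B_{K_0}(L^\infty(B_{r-2D}))$, show $G$ is a $c_1$-contraction mapping the ball into itself, invoke Banach, and obtain \eqref{eq:iterative_error_estimate} from $k^\star-[k\rho]_b=R^{\omega,b}[k\rho]-R^{\omega,b}[k^\star]$ plus the triangle inequality $\|k\rho-k^\star\|_\infty\le\|k\rho-[k\rho]_b\|_\infty+\|[k\rho]_b-k^\star\|_\infty$. That part of your argument is correct and is essentially how the paper closes the proof, including the self-mapping computation via $\|[k\rho]_b\|_\infty\le\|W_1\|_{L^1}\|k\rho\|_\infty$ and the choice $K_1=K_0(1-c_1)/(\|W_1\|_{L^1}+c_1)$. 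One point of emphasis differs: in the paper the contraction constant $c_1\approx\frac{C_1}{\omega}(b^2+b^5)+C_2\frac{b^3}{\sqrt\omega}\log(\omega/b)$ is driven below $1$ \emph{solely} by taking $\omega$ large; the bound $K_0<(\|\phi\|_\infty\|L\|_\infty)^{-1}$ is needed only so that the multiple-scattering (Neumann) series converges, and $K_1$ is shrunk only for the self-mapping property. Your proposed Lipschitz constant $C(b)(\omega^{-1/2}+K_0)$, with a term that does not decay in $\omega$ and must be killed by shrinking $K_0$, is not what the estimates actually give, though the theorem as stated would still follow from it.

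The genuine gap is that the two contraction estimates — Propositions \ref{prop:singlescat_inversion_operator} and \ref{prop:rem_mscat} — are only announced as ``the main obstacle'' and never established, and they constitute essentially all of the analytic content of the theorem. Concretely, what is missing is: (i) for $R_1^{\omega,b}$ in \eqref{eq:rem1}, writing the Schwartz kernel of the composition $\widetilde{T_1^\omega}^{-1,b}T_1^\omega$ as a Fourier multiplier with symbol $a(y,\xi,\omega)$, performing a one-dimensional stationary-phase reduction in the variable transverse to the line (Lemma \ref{lemma:aux}), and showing $a=a_0+a_r$ with $a_0=4\pi/|\xi|$ reproducing $[\rho k]_b$ \emph{exactly} and $|a_r|\le C\omega^{-1}(1+|\xi|^3)$, which is where the powers $b^2+b^5$ come from — note this is a genuinely stronger statement than the naive $b/\sqrt\omega$ you extrapolate from Theorem \ref{thm:straight_inversion}, and it requires expanding the composed operator rather than bounding the factors separately; (ii) for $R_2^{\omega,b}$ in \eqref{eq:rem2}, the stationary-phase analysis of the kernel $\beta^\omega$ in \eqref{eq:betam}, whose phase $\varphi_2$ has critical points that \emph{degenerate} as $|x_1-x_m|\to0$ (a caustic), forcing the splitting of Proposition \ref{prop:beta_bound} into pieces bounded by $|x_1-x_m|^{-1/2}$ and $|x_1-x_m|^{-1}$, the lower bounds on $S_2'$ and $S_2''$ of Lemma \ref{lemma:S}, and the near-diagonal cutoff at $\varepsilon=(b/\omega)^{1/2}$ that produces the $\log(\omega/b)$ factor. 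Your proposal correctly identifies that the remainders must be derivative-free in $k$, but identifying the requirement is not the same as meeting it; without (i) and (ii) there is no $c_1<1$ and the fixed-point argument has nothing to run on.
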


For fixed $b$, the constant $c_1$ behaves in $\omega$ like 
\begin{align*}
    c_1 \approx \frac{C_1}{\omega} (b^2 + b^5) +  C_2 \frac{b^3}{\omega^{\frac{1}{2}}}\log\left( \frac{\omega}{b} \right),
\end{align*}
where the first and second terms in the right-hand side are constants of boundedness in $L^\infty(B_{r-2D})$ for the error operators $R_1^{\omega,b}$ and $R_2^{\omega,b}$, respectively.

\begin{remark}
    Theorem \ref{thm:iterative} implies a much better accuracy at the cost of a more expensive inversion because of the iterative scheme. At each iteration, the computation of $R^{\omega,b}$ requires the computation of a forward transport problem. 
\end{remark}

\begin{remark}[On generalizations to higher dimensions]
The extension of Theorem \ref{thm:straight_inversion} to higher dimensions should cause no difficulty as the proof of this theorem mostly relies on the asymptotic expansion of the forward solutions derived in \cite{BJLM} for all dimensions. The extension of  Theorem \ref{thm:iterative} should prove much more challenging. The fixed point method applied in Theorem \ref{thm:iterative} requires that the stationary phase integrals compensate the inversion of a term linear in $k$ whose amplitude of order  $\omega^{\frac{n-1}{2}}$ grows with spatial dimension $n$. In dimension $n=2$, the stationary phase integrals provide $O(\omega^{-1})$ contributions that compensate the previous term. The difficult problem of how the stationary phase techniques, which are at the core of our proof of Theorem \ref{thm:iterative}, should be modified in higher spatial dimensions is not considered here.
\end{remark}

\paragraph{Roadmap of proofs :}

Theorem \ref{thm:straight_inversion} almost comes for free once the results from \cite{BJLM} are stated since it amounts to bounding the composition of bounded operators $\widetilde{T_1^\omega}^{-1,b}\circ(T_{1R}^\omega + T_{2+}^\omega)$. On the other hand, theorem \ref{thm:iterative} involves a much more technical analysis that will be the main focus of the paper. The proof relies on Propositions \ref{prop:singlescat_inversion_operator} and \ref{prop:rem_mscat}, which give bounds of the error operators $R_1^{\omega,b}$ and $R_2^{\omega,b}$ in some $\mL(L^p(B_{r-D}))$ spaces, respectively. For fixed values of $b$, these norms decay with $\omega$ and so these operators become contractions for large enough $\omega$. 

Both propositions require non-standard estimates on oscillatory integrals (OI) sometimes involving caustic sets (degenerate critical points of the phase function) near which one has to refine the analysis. References \cite{A,AKC,P1,P2} have greatly influenced the approach adopted here. Since proposition \ref{prop:rem_mscat} involves technical analysis as well, we have postponed the stationary phase (SP) part of this proposition to proposition \ref{prop:beta_bound}. Finally, in order to avoid redundancies in the paper, we have formulated an auxiliary lemma \ref{lemma:aux}, which gives, under a non-degeneracy condition, an accurate first-order expansion of a parameter-dependent OI with a simple expression of the remainder. Lemma \ref{lemma:aux} is used to prove propositions \ref{prop:singlescat_inversion_operator} and \ref{prop:beta_bound}.

\section{Forward transport theory, setup for inversion, and proof of theorem \ref{thm:straight_inversion}}

\subsection{Forward transport theory}\label{sec:forward}

We now recall some general results about forward transport that are necessarily in our analysis.
Let $X\subset\Rm^n, n\ge 2$ be an open convex bounded domain with $\mC^1$ boundary $\partial X$ and diameter $\diam>0$. Denote the incoming and outgoing boundaries 
\begin{align}
    \Gamma_\pm = \left\{ (x,v)\in \pa X\times \S^{n-1}\ | \pm \nu_x \cdot v >0 \right\},
    \label{eq:gammapm}
\end{align}
where $\nu_x$ is the outer normal to $\partial X$ at $x\in \partial X$. The time-harmonic transport equation reads
\begin{align}
    \begin{split}
	v\cdot\nabla u + (\sigma(x,v) + i\omega) u &= \int_{\S^{n-1}} k(x,v',v) u(x,v')\ dv', \quad (x,v)\in X\times\S^{n-1}, \\
        u(x,v) &= g(x), \quad (x,v)\in\Gamma_{-},	
    \end{split}    
    \label{eq:thtransport}
\end{align}
where $\omega\ge 0$ and the input function $g$ takes the form $g(x) = \delta(x-x_0), (x_0,x) \in (\partial X)^2$ (call it $g_{x_0}$). By $\delta(x-x_0)$ we mean the delta distribution that satisfies for each smooth function $\psi$ defined at the boundary:
\begin{align}
    \int_{\partial X} \delta(x-x_0) \psi(x) \ d\mu(x) = \psi(x_0), 
    \label{eq:deltaboundary}
\end{align}
where $d\mu(x)$ is the intrinsic measure at the boundary. For $(x,v)\in (X\times\nsphere)\bigcup\Gamma_+\bigcup\Gamma_-$, let $\tau_\pm(x,v)$ be the distance from $x$ to $\partial X$ traveling in the direction of $\pm v$, and $x_\pm(x,v) = x\pm\tau_\pm(x,v)v$ be the boundary point encountered when we travel from $x$ in the direction of $\pm v$.

As it is done in many settings, we integrate \eqref{eq:thtransport} along the direction $v$. We obtain that $u$ is a solution of the following integro-differential equation
\begin{align}
    (I-\mK_\omega) u = J_\omega g, 
    \label{eq:integrodiff}
\end{align}
where we have defined, for $\phi\in L^1(X\times\S^{n-1})$ and $\psi\in L^1(\partial X)$
\begin{align}
    \mK_\omega\phi(x,v)&:=\int_0^{\tau_-(x,v)}e^{-i\omega t}E(x-tv,x)\int_{\S^{n-1}}k(x-tv,v',v)\phi(x-tv,v')dv'dt, \label{eq:Kom} \\
    J_\omega\psi(x,v)&:=e^{-i\omega \tau_-(x,v)}E(x_-(x,v),x)\psi(x_-(x,v)), \quad\mbox{ and } \label{Jom}\\
    E(x,x') &:= \exp\left( -\int_0^{|x-x'|} \sigma(x + \widehat{x'-x}s, \widehat{x'-x})\ ds \right).\label{eq:att}
\end{align}
The operators $\mK_\omega$ and $J_\omega$ are well-defined and continuous operators in $\mL(L^1(X\times\nsphere))$ and $\mL(L^1(\partial X), L^1(X\times\nsphere))$, respectively \cite{dlen6,mokhtar}. 

We now state a theorem recently proved in \cite{BJLM}, which exhibits the asymptotic behavior for high $\omega$ of the solution $u$ of \eqref{eq:thtransport}, and motivates an inversion formula for $k$ ($\lceil\cdot\rceil$ denotes the ceiling part of a real number):

\begin{theorem}\label{thm:dec}
    Assume that $(\sigma,k)\in \mC^{\lceil\frac{n+3}{2}\rceil}(\overline{X}\times\nsphere)\times \mC^{\lceil\frac{n+3}{2}\rceil}_0(X\times\nsphere\times\nsphere)$, and assume hypothesis \ref{hyp:supportk}. Then, for large enough $\omega$, the measurement function $T^\omega$ admits the following singular decomposition
    \begin{align}
	T^\omega(x_0,x_c) = T_0^\omega (x_0,x_c) + T_1^\omega (x_0,x_c) + T_{2+}^\omega (x_0,x_c),
	\label{f1}
    \end{align}
    for a.e. $(x_0,x_c)\in \pa X\times\pa X$, where
    \begin{align}
	T_0^\omega (x_0,x_c) &= {e^{-i\omega|x_c-x_0|}E(x_0,x_c)\over |x_c-x_0|^{n-1}}|\nu_{x_0} \cdot e_0||\nu_{x_c}\cdot e_0|, \label{eq:ballistic}\\
	T_1^\omega(x_0,x_c) &= \widetilde{T_1^\omega}(x_0,x_c) + T_{1R}^\omega(x_0,x_c), \where\nonumber \\
	\widetilde{T_1^\omega}(x_0,x_c) &:= e^{-i\omega d_0} \left( \frac{2\pi}{i d_0\omega} \right)^\frac{n-1}{2} E(x_0,x_c) |\nu_{x_0}\cdot e_0||\nu_{x_c}\cdot e_0| \int_0^{d_0} \frac{k(x_0 + u e_0,e_0,e_0)}{(u (d_0-u))^{n-1\over 2}}\ du, \label{eq:sscat_asym}  \\
	\quad T_{1R}^\omega &\in L^\infty(\partial X\times \partial X)\quad\text{and}\quad \|T_{1R}^\omega\|_\infty \le C_1\ \omega^{-\frac{n+1}{2}}, \label{eq:sscat_remainder} \\
	T_{2+}^\omega &\in L^\infty(\pa X\times\pa X)\quad\text{and }\quad \|T_{2+}^\omega\|_\infty\le\left\lbrace
	\begin{array}{ll}
	    C_2\ \omega^{-1}, & n=2,\\
	    C_2\ \omega^{-2}\ln(\omega), & n=3,\\
	    C_2\ \omega^{-\frac{n+1}{2}}, & n\ge 4,
	\end{array}\label{eq:mscat}\right.
    \end{align}
    where $C_1$ and $C_2$ depend on the $\C^{\lceil\frac{n+3}{2}\rceil}$ and $\C^{\lceil\frac{n+1}{2}\rceil}$ norms of the coefficients $\sigma,k$, respectively.
\end{theorem}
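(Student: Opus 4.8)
My plan is to expand the solution of \eqref{eq:integrodiff} in its Neumann series $u = \sum_{m\ge 0}\mK_\omega^m J_\omega g_{x_0}$, which converges in $L^1(X\times\nsphere)$ under the subcriticality condition that makes \eqref{eq:integrodiff} well-posed, and then to track what the angular-averaging measurement functional $\mM\phi := \int_{v\cdot\nu>0}\phi_{|\Gamma_+}\,v\cdot\nu\,dv$ does to the $m=0$, $m=1$ and $m\ge 2$ contributions; these produce $T_0^\omega$, $T_1^\omega$ and $T_{2+}^\omega$ respectively. The ballistic term is explicit: by \eqref{Jom} one has $J_\omega g_{x_0}(x,v) = e^{-i\omega\tau_-(x,v)}E(x_-(x,v),x)\,\delta(x_-(x,v)-x_0)$, and evaluating the outgoing trace at $x=x_c$ the delta forces $v=\widehat{x_c-x_0}$; the Jacobian of the map $v\mapsto x_-(x_c,v)$ onto $\partial X$ combined with the weight $v\cdot\nu_{x_c}$ yields the two cosine factors and the $|x_c-x_0|^{-(n-1)}$, while $\tau_-$ along that ray equals $|x_c-x_0|$ and $E(x_-,x_c)=E(x_0,x_c)$, giving \eqref{eq:ballistic}.

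For the single-scattering term I would first carry out the inner $v'$-integral in $\mK_\omega J_\omega g_{x_0}$: with $y$ the running scattering point, $\delta(x_-(y,v')-x_0)$ selects $v'=\widehat{y-x_0}$ and the change of variables from $v'$ to $x_-(y,v')\in\partial X$ contributes the solid-angle factor $|\nu_{x_0}\cdot\widehat{y-x_0}|\,|y-x_0|^{-(n-1)}$. After evaluating the outgoing trace at $x_c$ and applying $\mM$, I change variables from $(t,v)$ (arclength and direction of the backward ray from $x_c$) to the scattering point $z\in X$, with $dt\,dv=|z-x_c|^{-(n-1)}\,dz$; this turns the measurement into the $n$-dimensional oscillatory integral $T_1^\omega(x_0,x_c)=\int_X e^{-i\omega\Psi(z)}a(z;x_0,x_c)\,dz$ with phase $\Psi(z)=|z-x_0|+|z-x_c|$ and smooth amplitude $a$ proportional to $k(z,\widehat{z-x_0},\widehat{z-x_c})\,E(x_0,z)E(z,x_c)\,|\nu_{x_0}\cdot\widehat{z-x_0}|\,(\nu_{x_c}\cdot\widehat{x_c-z})\,|z-x_0|^{-(n-1)}|z-x_c|^{-(n-1)}$. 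The critical set $\{\nabla\Psi=0\}$ is exactly the open segment from $x_0$ to $x_c$, along which the Hessian of $\Psi$ restricted to the $(n-1)$ transverse directions is positive definite with determinant $\big(\tfrac1u+\tfrac1{d_0-u}\big)^{n-1}$ at the point at distance $u$ from $x_0$, where $d_0:=|x_c-x_0|$. The one point needing care is that $a$ is singular like $(u(d_0-u))^{-(n-1)}$ precisely at the endpoints, where the critical manifold meets $\partial X$; but Hypothesis \ref{hyp:supportk} makes $a$ vanish near $\partial X$, so the effective integrand is smooth, compactly supported, and sits around a clean nondegenerate critical manifold in the interior. Stationary phase transverse to the segment then yields $\widetilde{T_1^\omega}$ as in \eqref{eq:sscat_asym} — the $(2\pi/\omega)^{(n-1)/2}$ from the Gaussian integrals, the $i^{-(n-1)/2}$ from the all-positive signature, the $d_0^{-(n-1)/2}$ and the weight $(u(d_0-u))^{-(n-1)/2}$ from the transverse Hessian determinant combined with the residual amplitude, the identity $E(x_0,z)E(z,x_c)=E(x_0,x_c)$ by multiplicativity of attenuation along the chord, and the restriction of $k$ to the chord — together with a remainder $T_{1R}^\omega$ controlled by the next term of the expansion, i.e. $O(\omega^{-(n-1)/2-1})=O(\omega^{-(n+1)/2})$, uniformly in $(x_0,x_c)$ since $d_0$ is bounded below on $\supp a$ and $a$ is controlled in $\mC^{\lceil\frac{n+3}{2}\rceil}$.

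For the multiple-scattering term, \eqref{eq:sol_decomp} reduces the task to bounding $\mM\big[(I-\mK_\omega)^{-1}\mK_\omega^2 J_\omega g_{x_0}\big]$; since $(I-\mK_\omega)^{-1}$ is bounded it suffices to estimate the double-scattering measurement $\mM[\mK_\omega^2 J_\omega g_{x_0}]$ (the dominant contribution) and to absorb the $m\ge 3$ terms, which regroup as $\mK_\omega^3$ applied to the full solution $u$, into a strictly lower-order remainder thanks to the extra oscillatory averaging. The same reductions as above recast $\mM[\mK_\omega^2 J_\omega g_{x_0}]$ as a $2n$-dimensional oscillatory integral over the two scattering points $(z_1,z_2)$ with phase $|x_0-z_1|+|z_1-z_2|+|z_2-x_c|$ and amplitude carrying an additional $|z_1-z_2|^{-(n-1)}$ from the middle solid-angle Jacobian; its critical set is the $2$-dimensional set of pairs collinear and correctly ordered on $[x_0,x_c]$, and formal stationary phase transverse to it gives $\omega^{-(2n-2)/2}=\omega^{-(n-1)}$, which is the announced $\omega^{-1}$ when $n=2$.

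The step I expect to be the main obstacle is making the multiple-scattering estimate rigorous near the locus where this clean analysis fails — the diagonal $z_1=z_2$, which lies on the critical manifold and where the amplitude is singular and the phase degenerates to the single-scattering phase, together with the approach to $\partial X$. The natural remedy is to split $X\times X$ into the region at distance $\gtrsim\omega^{-1/2}$ from the diagonal, handled by standard stationary phase, and a tube of width $\sim\omega^{-1/2}$ around the diagonal estimated directly by size; carrying out this bookkeeping, with Hypothesis \ref{hyp:supportk} neutralizing the boundary endpoints, produces the dimension-dependent rates in \eqref{eq:mscat}, the logarithm at $n=3$ arising from the borderline-integrable interplay of the $|z_1-z_2|^{-(n-1)}$ singularity with the $\omega^{-1/2}$ stationary-phase scale. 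Finally, all these bounds hold in $L^\infty((\partial X)^2)$ rather than merely $L^1$ because, after the reductions, the oscillatory integrals depend smoothly on the parameters $(x_0,x_c)$ with constants depending only on $\diam$, the lower bound for $d_0$ on $\supp k$, and the stated Hölder norms of $\sigma$ and $k$.
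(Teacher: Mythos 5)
A preliminary remark: the paper does not prove Theorem \ref{thm:dec} at all --- it is imported verbatim from \cite{BJLM}, and the only proof content in the present paper is the one-line modification of \cite[Lemma 5.2]{BJLM} used to obtain Theorem \ref{thm:dec2d}. So your proposal can only be measured against the argument of \cite{BJLM}, whose overall architecture you have reproduced correctly: Neumann series, identification of $T_0^\omega$, $T_1^\omega$, $T_{2+}^\omega$ with the $m=0$, $m=1$, $m\ge 2$ contributions, and stationary phase transverse to the chord $[x_0,x_c]$. Your ballistic and single-scattering computations are essentially right: the transverse Hessian determinant $\left(d_0/(u(d_0-u))\right)^{n-1}$ combined with the Jacobian factors $(u(d_0-u))^{-(n-1)}$ does yield the weight in \eqref{eq:sscat_asym}, Hypothesis \ref{hyp:supportk} does remove the endpoint degeneracy of the critical segment, and $\omega^{-(n+1)/2}$ is indeed the order of the next term in the transverse expansion.

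The genuine gap is in the multiple-scattering estimate, which is the hard part of the theorem and which you yourself flag. As written, your remedy does not close: splitting $X\times X$ at distance $\omega^{-1/2}$ from the diagonal and estimating the tube ``directly by size'' gives only $O(\omega^{-1/2})$, because the amplitude singularity $|z_1-z_2|^{-(n-1)}$ is exactly cancelled by the polar Jacobian $\rho^{n-1}$, so the tube $\{|z_1-z_2|\le \omega^{-1/2}\}$ has mass of order $\omega^{-1/2}$ --- far above every rate in \eqref{eq:mscat}. One must still exploit oscillation inside the tube: the phase remains nondegenerate in the $n-1$ directions transverse to the chord through the (nearly coincident) scattering points, which yields $\omega^{-(n-1)/2}$, and the residual radial oscillation $e^{-i\omega|z_1-z_2|}$ in the relative variable yields a further $\omega^{-1}$; the competition between this near-diagonal rate $\omega^{-(n+1)/2}$ and the off-diagonal rate $\omega^{-(n-1)}$ is what produces the three regimes of \eqref{eq:mscat} and the logarithm at the crossover $n=3$. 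Two further points need repair: ``standard stationary phase'' off the diagonal requires constants tracked in terms of derivatives of an amplitude that blow up as the diagonal is approached, which is exactly the delicate bookkeeping of \cite[Lemma 5.2]{BJLM} and is where the regularity indices $\lceil\frac{n+3}{2}\rceil$ and $\lceil\frac{n+1}{2}\rceil$ come from; and the control of the orders $m\ge 3$ rests on subcriticality (geometric decay of $\|\mK_\omega\|$, equivalently of powers of the operator $L$), not on ``extra oscillatory averaging'' --- no additional decay in $\omega$ is extracted from the higher orders, only summable constants at the same rate.
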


In two dimensions of space, theorem \ref{thm:dec} requires three derivatives on the optical coefficients to ensure that $T_{1R}^\omega$ decays like $\omega^{-\frac{3}{2}}$, and only two derivatives to ensure that $T_{2+}^\omega$ decays like $\omega^{-1}$ in $L^\infty(\partial X\times\partial X)$. Since the total remainder $T_{1R}^\omega + T_{2+}^\omega$ cannot decay faster than $\omega^{-1}$, we choose the $\omega^{-1}$-decay of $T_{1R}^\omega$ and save one derivative on the optical coefficients. Theorem \ref{thm:dec} becomes:
\begin{theorem}\label{thm:dec2d}
    Let $n=2$, and assume that $(\sigma,k)\in \mC^2(\overline{X}\times\nsphere)\times \mC^2_0(X\times\nsphere\times\nsphere)$ and that hypothesis \ref{hyp:supportk} holds. Then the decomposition of theorem \ref{thm:dec} holds with the following bound
    \begin{align}
	\|T_{1R}^\omega\|_\infty + \|T_{2+}^\omega\|_\infty\le C \omega^{-1},
	\label{eq:remainder2d}
    \end{align}
    where the constant $C$ depends on the $\C^2$ norm of the coefficients $(\sigma,k)$. 
\end{theorem}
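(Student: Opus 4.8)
The plan is to reuse Theorem \ref{thm:dec} almost verbatim and to re-examine only the single place where the $\C^3$ hypothesis was actually spent, namely the stationary-phase estimate for the single-scattering remainder. The decomposition \eqref{f1}, the closed forms \eqref{eq:ballistic} and \eqref{eq:sscat_asym} for $T_0^\omega$ and $\widetilde{T_1^\omega}$, and the membership $T_{1R}^\omega,T_{2+}^\omega\in L^\infty(\pa X\times\pa X)$ are algebraic/structural and use no more than continuity of $(\sigma,k)$, so I would simply quote them. Likewise the bound $\|T_{2+}^\omega\|_\infty\le C_2\omega^{-1}$ for $n=2$ is exactly \eqref{eq:mscat}, whose constant depends only on the $\C^{\lceil (n+1)/2\rceil}=\C^2$ norms of $(\sigma,k)$; this half of \eqref{eq:remainder2d} is therefore inherited unchanged. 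The whole task reduces to proving $\|T_{1R}^\omega\|_\infty\le C\omega^{-1}$ when only $(\sigma,k)\in\C^2$ is available.

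For that I would return to the oscillatory-integral representation of the single scattering underlying the proof of Theorem \ref{thm:dec} in \cite{BJLM}: writing the scattering point as $y$,
\begin{align*}
 T_1^\omega(x_0,x_c)=\int_X e^{i\omega(|x_0-y|+|y-x_c|)}\,a(y;x_0,x_c)\,dy ,
\end{align*}
where the amplitude $a$ collects the two attenuations $E(x_0,y)$ and $E(y,x_c)$ from \eqref{eq:att}, the point-source spreading Jacobian, and the scattering weight built from $k$. Hypothesis \ref{hyp:supportk} confines $y$ to $B_{r-2D}$, which keeps $d_0:=|x_0-x_c|$ bounded below on the effective support and makes $a$ of class $\C^2$ in $y$ precisely when $(\sigma,k)\in\C^2$. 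The phase is critical exactly on the open chord $(x_0,x_c)$, a one-dimensional critical manifold whose transverse Hessian is comparable to $s^{-1}+(d_0-s)^{-1}$, $s$ the arclength. Stationary phase in the single transverse variable produces the leading term $\widetilde{T_1^\omega}$ of \eqref{eq:sscat_asym} (the weight $(u(d_0-u))^{-(n-1)/2}$ is the trace of the transverse Hessian), and -- this is the point -- since only the leading term is extracted, the remainder is controlled by $C\omega^{-1}$ with $C$ governed by $\|a\|_{\C^2}$, hence by $\|k\|_{\C^2}$, $\|\sigma\|_{\C^2}$ and $D$. Pushing the expansion to its next term -- the route to the $\omega^{-3/2}$ bound of \cite{BJLM} -- would bring in second transverse derivatives of $a$ and therefore a third derivative of the coefficients, which is exactly the derivative we are saving.

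The step I expect to be the main obstacle is the failure of uniformity of this estimate near the two ends of the chord, i.e.\ when the scattering point approaches $x_0$ or $x_c$: there the transverse Hessian degenerates like $s^{-1}$, the stationary-phase normalizations blow up, and the estimate has to be localized and carried out by hand on the endpoint pieces, checking that the ``$\C^2$ amplitude $\Rightarrow$ $O(\omega^{-1})$ remainder'' bookkeeping survives there uniformly in $(x_0,x_c)$. This is the same degeneracy that later forces the more careful oscillatory-integral analysis of the paper, and I would dispose of it with cut-off and rescaling arguments in the spirit of Lemma \ref{lemma:aux}; near the endpoints the contribution is in any event an absolutely convergent (essentially non-oscillatory) integral whose size, once the $e^{i\omega t}$ factor is retained, is already $O(\omega^{-1})$. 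Combining the interior and endpoint estimates gives $\|T_{1R}^\omega\|_\infty\le C\omega^{-1}$, which together with the $T_{2+}^\omega$ bound quoted above yields \eqref{eq:remainder2d}.
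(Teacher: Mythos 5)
Your proposal takes essentially the same route as the paper's (two-line) proof: both inherit the decomposition and the $T_{2+}^\omega$ bound directly from Theorem \ref{thm:dec}, and both observe that the $\omega^{-3/2}$ bound on $T_{1R}^\omega$ in \cite{BJLM} is produced by one final integration by parts in the stationary-phase remainder which costs one derivative of the integrand, so omitting it yields $O(\omega^{-1})$ under only $\C^2$ regularity (exactly the trade-off recorded in \eqref{eq:bound_I_g}). One small remark: Hypothesis \ref{hyp:supportk} already keeps the scattering point at distance at least $2D$ from both $x_0$ and $x_c$ on the support of the amplitude, so the endpoint degeneracy you single out as the main obstacle does not actually arise here.
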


\begin{proof}
    In the proof of \cite[Lemma 5.2]{BJLM}, do not perform the last integration by parts in the case $m+d$ odd. It trades one derivative of the integrand for a decay factor of $\omega^{-\frac{1}{2}}$.  
\end{proof}


\subsection{The 2D case, when $X=B_r$ and $k(x,v,v') \equiv k(x)\phi(v,v')$}

In this paragraph, we assume that the scattering function admits the expression $k(x,v,v') = k(x) \phi(v, v')$, where $\phi$ is a known phase function. Suppose that the domain $X$ is the centered ball of radius $r>0$ in $\Rm^n$. 
For $x_0$ and $x_c$ located on $\partial X$, we have the properties
\begin{align*}
  (x_c-x_0)\cdot e_0 = |x_c-x_0| \qandq (x_c+x_0)\cdot e_0 = (|x_c|^2-|x_0|^2)|x_c-x_0|^{-1} = 0,
\end{align*}
from which we deduce that $-2x_0\cdot e_0 = |x_c-x_0|$. In this case, 
\begin{align*}
  r^2 - |x_0 + t e_0|^2 = r^2 - |x_0|^2 - 2tx_0\cdot e_0 - t^2 = t|x_c-x_0| - t^2 = |x_c-x_0|t\left( 1-\frac{t}{|x_c-x_0|} \right).
\end{align*}
Hence we can rewrite the first term in equation \eqref{eq:sscat_asym} as
\begin{align}
    \widetilde{T_1^\omega} (x_0,x_c) = e^{-i\omega d_0} \left( \frac{2\pi}{id_0 \omega} \right)^\frac{n-1}{2}E(x_0,x_c)|\nu_{x_0}\cdot e_0||\nu_{x_c}\cdot e_0| \phi(e_0,e_0) \int_0^{d_0}\frac{k(x_0+te_0)} {\left( r^2 - |x_0 + te_0|^2 \right)^\frac{n-1}{2}}\ dt.
  \label{eq:sscat_asym_ball}
\end{align}
In this case, the line integral in \eqref{eq:sscat_asym_ball} is no longer a general weighted integral transform but simply the X-Ray transform of the function $k(x) (r^2-|x|^2)^{\frac{1-n}{2}}$, whose inversion is much more practical. Back to the two-dimensional case, we can now deduce the expression of the function $A^\omega$ defined loosely in \eqref{eq:data_asym}
\begin{align}
    A^\omega(x_0,x_c) =  e^{-i\omega d_0} \left( \frac{2\pi}{i d_0} \right)^\frac{1}{2} E(x_0,x_c)|\nu_{x_0}\cdot e_0||\nu_{x_c}\cdot e_0| \phi(e_0,e_0).
    \label{eq:a_omega}
\end{align}

\subsection{Inversion in two dimensions in parallel geometry}\label{sec:inversion}

\subsubsection{The parallel geometry}

In order to setup an inversion, we must now choose a geometry that is adapted to the X-Ray transform. Namely, we must introduce variables that parameterize lines in space. We choose the parallel geometry since inversion formulas in this case are suitable for our analysis. 

In parallel geometry, we introduce the variables $(s,\theta)\in \mZ:= \Rm\times(0,2\pi)$ and we would like to parameterize the emittor's position $x_0$ and the detector's position $x_c$ in such a way that the line joining $x_0$ and $x_c$ is
\begin{align*}
    L(s,\theta) := \left\{ s\hat\theta^\perp + t\hat\theta, \quad t\in\Rm \right\},
\end{align*}
where we have defined $\hat\theta= (\cos\theta, \sin\theta)$ and $\hat\theta^\perp = (-\sin\theta,\cos\theta)$. With the constraint that $|x_0| = |x_c| = r$, this is done by writing
\begin{align}
    x_0(s,\theta) = s\hat\theta^\perp - \sqrt{r^2-s^2}\hat\theta, \qandq x_c(s,\theta) = s\hat\theta^\perp + \sqrt{r^2-s^2}\hat\theta,
    \label{eq:boundary_radon}
\end{align}
in which case we have $e_0 := \widehat{x_c-x_0} = \hat\theta$ and $d_0(s) := |x_c-x_0| = 2\sqrt{r^2-s^2}$. 

We now denote by $\mZ_D = [-r+D, r-D]\times\S^1$ the support of $\chi(s) \mD^\omega(s,\theta)$. Over $\mZ_D$, we have that $\sqrt{r^2-s^2} \ge \sqrt{rD} > 0$. This guarantees that $x_0$, $x_c$, $d_0(s)$ and $e_0(\theta)$ are well-defined and smooth over $\mZ_D$. 

In the sequel, any function $f(x_0,x_c)$ will be understood as a function defined on $\mZ$ (or on $\mZ_D$ by restriction) with $f(s,\theta) = f(x_0(s,\theta),x_c(s,\theta))$ for $(s,\theta)\in\mZ$. In particular, the function $A^\omega$ \eqref{eq:a_omega} takes the expression
\begin{align}
    A^\omega(s,\theta) = e^{-i\omega 2\sqrt{r^2-s^2}} \left( \frac{2\pi}{i} \right)^\frac{1}{2} e^{-P[\sigma](s,\theta)} \frac{(r^2-s^2)^\frac{3}{4}}{\sqrt{2}r^2} \varphi(\hat\theta,\hat\theta).
    \label{eq:a_omega_radon}
\end{align}
As stated in section \ref{sec:statement}, $(A^\omega)^{-1}$ is uniformly bounded in $\mZ_D$ by
\begin{align}
    \|(A^\omega)^{-1} \|_\infty \le \frac{r^2}{\sqrt{\pi}}  e^{2r\|\sigma\|_\infty} \frac{1}{(rD)^\frac{3}{4}} \left( \min_{\theta} |\varphi(\hat\theta,\hat\theta)| \right)^{-1} <\infty,
    \label{eq:a_omega_radon_bound}
\end{align}
independently of $\omega$. $(A^\omega)^{-1}$ is also in $L^1(\mZ_D)$, hence in every $L^p(\mZ_D)$ by interpolation.

\subsubsection{Radon Transform and Filtered-Backprojection operators} 
In parallel geometry, for functions $f$ and $g$ respectively defined on $\Rm^2$ and $\mZ$, we define the following Radon transform $P$ and backprojection $P^\sharp$ by:
\begin{align}
    P [f](s,\theta) &:= \int_{L(\theta,s)} f = \int_{\Rm} f(s\hat\theta^\perp + t\hat\theta)\ dt,\ (s,\theta)\in\mZ, \label{eq:radon} \\
    P^\sharp [g](x) &:= \int_{\Sone} g(x\cdot\hat\theta^\perp, \theta)\ d\theta,\ x\in\Rm^2. \label{eq:backproj}
\end{align}
Note that $P$ and $P^\sharp$ are adjoint to each other in the spaces $L^2(\Rm^2)$ and $L^2(\mZ)$ since
\begin{align*}
    \int_\mZ g(s,\theta) P[f](s,\theta)\ ds\ d\theta = \int_{\Rm^2} P^\sharp[g](x) f(x)\ dx.
\end{align*}
In a 2D setting, inversion of the Radon transform can be done by means of the following formula
\begin{align}
    f = \frac{1}{4\pi} P^\sharp \circ \pdr{}{s} P[f].
    \label{eq:IRT}
\end{align}
This formula is however practically never used, because it requires differentiating the possibly noisy data as it tries to reconstruct $f$ at arbitrarily small scales and this generates instabilities in reconstructions.  Rather, we introduce a regularized version of \eqref{eq:IRT} based on the following observation that
\begin{align}
    P^\sharp [w_b \stackrel{s}{\star} P[f]] = W_b \stackrel{x}{\star} f = \frac{1}{2\pi} \int_{\Rm^2} e^{ix\cdot\xi} \hat\Phi \left( \frac{|\xi|}{b} \right) \hat f(\xi)\ d\xi,
    \label{eq:FBP}
\end{align}
where $\stackrel{x}{\star}$ and $\stackrel{s}{\star}$ respectively denote 2D and 1D convolutions in the $x$ and $s$ variables and $W_b = P^\sharp w_b = \mF^{-1}[\hat \Phi]$ is an approximation of identity ($\mF^{-1}$ denotes the inverse Fourier transform in 2D). Following \cite{natt} in two dimensions of space, we fix a function $\hat\Phi$ first, which is a cutoff function in frequency supported inside $[0,1]$. From $\hat\Phi$, we define the filter $w_b$ by 
\begin{align}
    w_b(u) = \frac{1}{8\pi^2}\int_\Rm e^{i\sigma u} |\sigma| \hat\Phi\left( \frac{|\sigma|}{b} \right)\ d\sigma, 
    \label{eq:wb}
\end{align}
where $b$ plays the role of a bandwidth. Notice that the function $w_b$ in \eqref{eq:wb} naturally satisfies the identities
\begin{align}
    w_b(u) = b^2 w_1(bu), \qandq w_b^{(n)}(u) = b^{2+n} w_1^{(n)}(bu),\ n\ge 0.
    \label{eq:wbidentity}
\end{align}
For $g$ defined on $\mZ$, let us define the filtered-backprojection (FBP) operator $P^{-1,b}$ by
\begin{align}
    P^{-1,b}[g] = P^\sharp[ w_b \stackrel{s}{\star} g],
    \label{eq:reg_FBP}
\end{align}
and note the following straightforward estimate when $g\in L^\infty(\mZ)$, useful for section \ref{sec:proof1}:
\begin{align}
    \|P^{-1,b}[g]\|_{L^\infty(B_r)} \le 2\pi b\|w_1\|_{L^1} \|g\|_\infty.
    \label{eq:reg_FBP_estimate}
\end{align}
Using convolution properties of the operators \eqref{eq:radon} and \eqref{eq:backproj}, we can also express the FBP operator as follows \cite{natt}
\begin{align}
    P^{-1,b} = \frac{1}{4\pi} I^{-1,b}\circ P^\sharp, \where\quad  I^{-1,b}f := \mF^{-1} \left[ \hat\Phi\left( \frac{|\xi|}{b} \right) |\xi| \mF[f]  \right].
    \label{eq:reg_FBP2}
\end{align}

\subsection{Proof of theorem \ref{thm:straight_inversion} }\label{sec:proof1}

We recall that 
\begin{align*}
    R^{\omega,b}[k] = \widetilde{T_1^\omega}^{-1,b} [T_{1R}^\omega[k] + T_{2+}^\omega[k]] = \sqrt{\omega} P^{-1,b} \left[(A^\omega)^{-1}\chi[T_{1R}^\omega[k] + T_{2+}^\omega[k]]\right]
\end{align*}
From theorem \ref{thm:dec2d}, we have 
\begin{align}
    \|T_{1R}^\omega[k] + T_{2+}^\omega[k]\|_{L^\infty((\partial X)^2)} \le C\omega^{-1},
    \label{eq:rem_bound}
\end{align}
where $C$ depends on $\|\sigma\|_{\mC^2}$ and $\|k\|_{\mC^2}$. Reparameterizing $T_{1R}^\omega[k] + T_{2+}^\omega[k]$ into the cylinder variables $(s,\theta)$ yields a same bound as \eqref{eq:rem_bound}, but in the space $L^\infty(\mZ)$. Hence, using estimates \eqref{eq:a_omega_radon_bound} and \eqref{eq:reg_FBP_estimate}, we get
\begin{align}
    \|R^{\omega,b}[k]\|_{L^\infty(B_{r-2D})} &\le \sqrt{\omega} 2\pi b\|w_1\|_{L^1} \|(A^\omega)^{-1}\|_\infty \|T_{1R}^\omega[k] + T_{2+}^\omega[k]\|_\infty
    \label{eq:thm1_bound}
\end{align}
Combining inequalities \eqref{eq:rem_bound} and \eqref{eq:thm1_bound} concludes the proof.

\section{Proof of theorem \ref{thm:iterative}}\label{sec:proof2}

\subsection{Auxiliary lemma} \label{sec:aux_lemma}
In the sequel, we use the following lemma twice, which is an exact first-order stationary phase expansion of an 1D oscillatory integral with parameters. This particular case, where the second derivative of the phase function is uniformly bounded away from zero along one direction, is useful for reducing the dimensionality of a given OI while keeping under control the possible singularities (i.e. degenerate critical points) of the phase function.

Let us consider the following integral for $\lambda\in\Rm^n$, $n\ge 0$,
\begin{align}
    I(\lambda) = \int_{\Rm} e^{i\omega\varphi(x,\lambda)} f(x,\lambda)\ dx,
    \label{eq:I_lambda}
\end{align}
where $f$ is assumed to be compactly supported in some set of the form $[-\Delta_x, \Delta_x]\times B$, $B$ compact in $\Rm^n$, and $\varphi$ is smooth on the support of $f$. Assume further the existence of $0 < \Phi_m \le \Phi_M <\infty$ such that 
\begin{align}
    \Phi_m\le |\partial_{xx}^2\varphi(x,\lambda)|\le \Phi_M, \quad (x,\lambda) \in \supp\ f.
    \label{eq:condition_phi}
\end{align}
The lower bound in \eqref{eq:condition_phi} ensures that for every $\lambda\in B$, there exists a unique $X(\lambda)$ such that $\partial_x\varphi(X(\lambda),\lambda) = 0$, since $\partial_x\varphi(\cdot,\lambda)$ is strictly monotonous. By the implicit functions theorem, $X$ is a smooth function of $\lambda$. Condition \eqref{eq:condition_phi} allows us to make the phase globally quadratic in the $x$ variable (which otherwise would require the Morse lemma in a neighborhood of the critical point, the size of which we could not control). We thus write
\begin{align}
    \varphi(x,\lambda) &= S(\lambda) + \sigma K(\lambda) \frac{\eta(x,\lambda)^2}{2}, \where \label{eq:phi_quadratic} \\[2mm]
    S(\lambda) &:= \varphi(X(\lambda),\lambda), \quad K(\lambda):= |\partial_{xx}^2 \varphi(X(\lambda),\lambda)|, \qandq \sigma = \sgn{\partial_{xx}^2\varphi} \nonumber
\end{align}
is clearly constant on $\supp\ f$. The lemma is stated as follows:
\begin{lemma}
    Let a one-dimensional parameter-dependent OI be given by $I(\lambda)$ in \eqref{eq:I_lambda}, where $\varphi$ satisfies the condition \eqref{eq:condition_phi}. Then $I(\lambda)$ admits the following decomposition $I = I_0 + I_r$, with
    \begin{align}
	I_0(\lambda) &:= e^{i\sigma\frac{\pi}{4}} \left( \frac{2\pi}{\omega} \right)^{\frac{1}{2}} e^{i\omega S(\lambda)} \frac{f(X(\lambda),\lambda)}{K(\lambda)} \label{eq:I_f_expr} \\
	|I_r(\lambda)| &\le \frac{C_r}{\omega} \min \left( \frac{1}{\sqrt{\omega}} \sum_{i=0}^3 \left\| \frac{\partial^i f}{\partial x^i}(\cdot,\lambda) \right\|_{L^1(\Rm)}, \sum_{i=0}^2 \left\| \frac{\partial^i f}{\partial x^i}(\cdot,\lambda) \right\|_{L^1(\Rm)} \right), \quad\lambda\in B. \label{eq:bound_I_g}
    \end{align}
    \label{lemma:aux}
\end{lemma}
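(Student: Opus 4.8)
The plan is to use the normal form \eqref{eq:phi_quadratic} to reduce $I(\lambda)$ to a one-dimensional integral with a model quadratic phase, then to extract the leading term from the classical Fresnel identity and to estimate the remainder by one integration by parts, sharpened by a van der Corput bound for the faster-decaying estimate. First I would make the substitution $x\mapsto\eta$ rigorous: setting $h(x,\lambda):=\int_0^1(1-t)\,\partial^2_{xx}\varphi\big(X(\lambda)+t(x-X(\lambda)),\lambda\big)\,dt$, Taylor's formula gives $\varphi(x,\lambda)-S(\lambda)=h(x,\lambda)\,(x-X(\lambda))^2$ on $\supp f$, while \eqref{eq:condition_phi} forces $|h|=\sigma h\in[\tfrac12\Phi_m,\tfrac12\Phi_M]$ there; hence $\eta(x,\lambda):={\rm sign}(x-X(\lambda))\,|x-X(\lambda)|\,\sqrt{2|h(x,\lambda)|/K(\lambda)}$ is smooth, realizes \eqref{eq:phi_quadratic}, vanishes at $x=X(\lambda)$, and — since $\partial_x\varphi(\cdot,\lambda)$ is strictly monotone by \eqref{eq:condition_phi} — has $\partial_x\eta>0$ on $\supp f$, so $x\mapsto\eta$ is a diffeomorphism onto its image whose derivatives, and those of its inverse, are bounded on $\supp f$ uniformly in $\lambda\in B$ by quantities depending only on $\Phi_m$, $\Phi_M$ and finitely many derivatives of $\varphi$. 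After this change of variables,
\begin{align*}
  I(\lambda)=e^{i\omega S(\lambda)}\int_{\Rm}e^{i\sigma\mu(\lambda)\eta^2/2}\,F(\eta,\lambda)\,d\eta,\qquad \mu(\lambda):=\omega K(\lambda)\in[\Phi_m\omega,\Phi_M\omega],
\end{align*}
where $F(\eta,\lambda)=f(x(\eta,\lambda),\lambda)\,\partial_\eta x(\eta,\lambda)$ is supported in a fixed bounded $\eta$-interval, $F(0,\lambda)=f(X(\lambda),\lambda)$, and (changing variables back in each integral) $\|\partial_\eta^i F(\cdot,\lambda)\|_{L^1(\Rm)}\le C\sum_{j=0}^{i}\|\partial_x^j f(\cdot,\lambda)\|_{L^1(\Rm)}$ for $i\le 3$.

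Next I would isolate the leading term. Fixing $\chi\in C_c^\infty(\Rm)$ with $\chi\equiv 1$ near $0$, I write $F=F(0,\lambda)\chi+\eta\,G(\eta,\lambda)$ with $G$ smooth and compactly supported (Hadamard's lemma) and $\|\partial_\eta^i G(\cdot,\lambda)\|_{L^1}\le C\sum_{j=0}^{i+1}\|\partial_\eta^j F(\cdot,\lambda)\|_{L^1}$. On $\supp(1-\chi)$ the phase $\eta\mapsto\sigma\mu\eta^2/2$ has no critical point, so repeated integration by parts gives $\int e^{i\sigma\mu\eta^2/2}(1-\chi)F\,d\eta=O(\mu^{-N})$ for every $N$; combined with the Fresnel identity $\int_{\Rm}e^{i\sigma\mu\eta^2/2}\,d\eta=e^{i\sigma\pi/4}\sqrt{2\pi/\mu}$, the $F(0,\lambda)\chi$ piece produces, after restoring the factor $e^{i\omega S(\lambda)}$, the stationary-phase leading term $I_0(\lambda)$ of \eqref{eq:I_f_expr} up to an $O(\mu^{-N})$ error.

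For the remainder I would treat the $\eta\,G$ piece via $\eta\,e^{i\sigma\mu\eta^2/2}=(i\sigma\mu)^{-1}\partial_\eta\big(e^{i\sigma\mu\eta^2/2}\big)$ and one integration by parts (boundary terms vanish by compact support),
\begin{align*}
  \int_{\Rm}e^{i\sigma\mu\eta^2/2}\,\eta\,G(\eta,\lambda)\,d\eta=-\frac{1}{i\sigma\mu}\int_{\Rm}e^{i\sigma\mu\eta^2/2}\,\partial_\eta G(\eta,\lambda)\,d\eta.
\end{align*}
Bounding the last integral crudely by $\|\partial_\eta G\|_{L^1}$ gives an $O(\mu^{-1})$ contribution controlled by $\sum_{j=0}^2\|\partial_x^j f\|_{L^1}$; alternatively, since van der Corput's lemma applied to the phase $\eta\mapsto\sigma\eta^2/2$ (whose second derivative has modulus $1$) yields $\big|\int_{-\infty}^{x}e^{i\sigma\mu t^2/2}\,dt\big|\le C\mu^{-1/2}$ uniformly in $x$, a further integration by parts gives $\big|\int e^{i\sigma\mu\eta^2/2}\partial_\eta G\,d\eta\big|\le C\mu^{-1/2}\|\partial_\eta^2 G\|_{L^1}$, an $O(\mu^{-3/2})$ contribution controlled by $\sum_{j=0}^3\|\partial_x^j f\|_{L^1}$. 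Setting $I_r:=I-I_0$, collecting the two estimates together with the negligible $O(\mu^{-N})$ terms, and using $\mu\ge\Phi_m\omega$, I obtain \eqref{eq:bound_I_g} with $C_r$ depending only on $\Phi_m$, $\Phi_M$, $\Delta_x$ and finitely many derivatives of $\varphi$ on $\supp f$.

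The bookkeeping in the last two steps is routine; the two delicate points are (i) producing the change of variables of the first step with derivative bounds that are \emph{uniform} in $\lambda$ and depend only on $\varphi$ — this is precisely where the uniform nondegeneracy \eqref{eq:condition_phi} is essential, since it makes the quadratic normal form global in $x$ rather than confined to an uncontrolled $\lambda$-dependent neighborhood of $X(\lambda)$, so that no Morse lemma with variable domain is needed — and (ii) arranging the remainder so that it costs only $L^1$-norms of $f$ and at most three of its $x$-derivatives, which dictates the order ``integrate by parts once to gain a factor $\mu^{-1}$, then estimate what is left by van der Corput against an amplitude'' in place of a second-order stationary-phase expansion, whose remainder would consume additional derivatives of $f$.
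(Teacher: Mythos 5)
Your argument is correct and follows the paper's proof essentially step for step: the same global quadratic change of variables $x\mapsto\eta$ justified by \eqref{eq:condition_phi}, the same Taylor/Hadamard splitting of the amplitude with a cutoff near $\eta=0$ (the paper's $g$ and $h$), the Fresnel identity for the leading term $I_0$, one integration by parts for the $\omega^{-1}$ bound, and a second integration by parts against the uniformly bounded Fresnel antiderivative (your van der Corput bound is exactly the paper's function $F_\sigma$) for the $\omega^{-3/2}$ bound. No substantive difference to report.
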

\begin{proof} 
    Starting from expression \eqref{eq:phi_quadratic} and using Taylor expansions, we have
   \begin{align*}
       \eta &= \sgn{x-X(\lambda)} \left[ \frac{2}{K(\lambda)} (\varphi(x,\lambda)-S(\lambda)) \right]^\frac{1}{2} \\
       &= (x-X(\lambda)) \left[ \frac{2}{K(\lambda)} \int_0^1 (1-t)  |\partial_{xx}^2\varphi|(tx + (1-t)X(\lambda))\ dt \right]^\frac{1}{2}.
   \end{align*}
   From this relation, we can show that the support of $f$, now a function of $(\eta,\lambda)$, is included in $[-H_0,H_0]\times B$, where 
   \begin{align}
       H_0 := \frac{2\Delta_x}{\sqrt{\Phi_m}} \sqrt{2\Phi_M}.
       \label{eq:H0}
   \end{align}
   The corresponding jacobian is given by 
   \begin{align*}
       \pdr{\eta}{x} = \frac{1}{\sqrt{2 K(\lambda)}} \frac{\int_0^1 |\partial_{xx}^2\varphi|(tx + (1-t)X(\lambda))\ dt}{\sqrt{\int_{0}^1 (1-t)|\partial_{xx}^2\varphi|(tx + (1-t)X(\lambda))\ dt}},
   \end{align*}
   and is clearly well-defined everywhere by virtue of the bounds \eqref{eq:condition_phi} and smooth. Changing variable $x\to \eta$ in \eqref{eq:I_lambda}, we get
   \begin{align}
       I(\lambda) = e^{i\omega S(\lambda)} \int_{\Rm} e^{i\omega\sigma K(\lambda) \frac{\eta^2}{2}} f(x(\eta,\lambda),\lambda) \pdr{x}{\eta}(\eta,\lambda)\ d\eta.
       \label{eq:I_lambda2}
   \end{align}
   We now write a Taylor expansion of $f \partial_\eta x$ with integral remainder in the $\eta$ variable (note that $\partial_\eta x=1$ at $\eta=0$ and that $x(0,\lambda)=X(\lambda)$):
   \begin{align}
       f(x(\eta,\lambda),\lambda)\pdr{x}{\eta} = f(X(\lambda),\lambda) + \eta\int_0^1 \pdr{}{\eta}\left[ f(x(\cdot,\lambda),\lambda) \pdr{x}{\eta}(\cdot,\lambda)  \right](t\eta)\ dt.
       \label{eq:Taylor1}
   \end{align}
   In order to make the second term of the RHS of \eqref{eq:Taylor1} compactly supported, we write the following: let $\chi$ is a smooth function of $\eta$, supported in $[-2H_0,2H_0]$ and identically one on $[-H_0,H_0]$, then $\chi f = f$. Multiplying \eqref{eq:Taylor1} by $\chi(\eta)$, we rewrite it as 
   \begin{align}
       f(x(\eta,\lambda),\lambda)\pdr{x}{\eta} &= f(X(\lambda),\lambda) + \eta g(\eta,\lambda) + h(\eta,\lambda),\quad (\eta,\lambda)\in\Rm\times B,  \where \label{eq:Taylor2} \\
       g(\eta,\lambda) &:= \chi(\eta) \int_0^1 \pdr{}{\eta}\left[ f(x(\cdot,\lambda),\lambda) \pdr{x}{\eta}(\cdot,\lambda) \right](t\eta)\ dt, \label{eq:Taylor_remainder} \\
       h(\eta,\lambda) &:= f(X(\lambda),\lambda) (1-\chi(\eta)). \label{eq:rd_rem}
   \end{align}
   Plugging \eqref{eq:Taylor2} into \eqref{eq:I_lambda2} yields the decomposition $I = I_0 + I_{r1} + I_{r2}$, where   
   \begin{align}
       I_0(\lambda) &:= e^{i\omega S(\lambda)} f(X(\lambda),\lambda) \int_\Rm e^{i\omega\sigma K(\lambda)\frac{\eta^2}{2}}\ d\eta = e^{i\sigma\frac{\pi}{4}} \left( \frac{2\pi}{\omega} \right)^{\frac{1}{2}} e^{i\omega S(\lambda)} \frac{f(X(\lambda),\lambda)}{K(\lambda)}, \label{eq:I_f}
   \end{align}
   where we used the formula $\int_\Rm e^{i\sigma\beta\frac{\eta^2}{2}}\ d\eta = e^{i\sigma\frac{\pi}{4}} \sqrt{\frac{2\pi}{\beta}}$ that holds for $\sigma = \pm 1$ and $\beta>0$. The remainder integrals are expressed as
   \begin{align}
       I_{r1}(\lambda) &:= e^{i\omega S(\lambda)} \int_{\Rm} \eta e^{i\omega\sigma K(\lambda) \frac{\eta^2}{2}} g(\eta,\lambda)\ d\eta = -\sigma\frac{e^{i\omega S(\lambda)}}{i\omega K(\lambda)} \int_{\Rm} e^{i\omega\sigma K(\lambda) \frac{\eta^2}{2}} \pdr{g}{\eta}(\eta,\lambda)\ d\eta, \label{eq:I_g}\\
       I_{r2}(\lambda) &:= f(X(\lambda),\lambda) \int_{\Rm} e^{i\omega \sigma K(\lambda) \frac{\eta^2}{2}}(1-\chi(\eta))\ d\eta, \label{eq:I_h}
   \end{align}
   where we used the formula $\eta e^{i\omega\sigma K(\lambda) \frac{\eta^2}{2}} = \frac{1}{i\omega\sigma K(\lambda)}\partial_\eta e^{i\omega\sigma K(\lambda) \frac{\eta^2}{2}}$ to integrate by parts in $I_{r1}(\lambda)$. We integrate by parts once again in \eqref{eq:I_g} and obtain this time
   \begin{align}
       I_g(\lambda) = \sigma \frac{e^{i\omega S(\lambda)}}{i (\omega K(\lambda))^\frac{3}{2}} \int_{\Rm} F_\sigma (\eta \sqrt{\omega K(\lambda)}) \pdrr{g}{\eta} (\eta,\lambda)\ d\eta,
       \label{eq:I_g2}
   \end{align}
   where the function $F_\sigma(u) := \int_{-\infty}^u e^{i\sigma \frac{t^2}{2}}\ dt$ is related to the Fresnel integral and is known to be uniformly bounded in $\Rm$. 
   From the two expressions of $I_{r1}(\lambda)$ in \eqref{eq:I_g2} and in the last right-hand side of \eqref{eq:I_g}, the expression of $g$ in \eqref{eq:Taylor_remainder} and the fact that $\pdr{^n x}{\eta^n}$ is bounded for $n \le 4$, we obtain that $I_{r1}$ satisfies a bound of the form \eqref{eq:bound_I_g}. 
   Moreover, the remaining integral $I_{r2}$ \eqref{eq:I_h} is rapidly decaying because $1-\chi(\eta)$ is supported away from $\eta=0$, so it can be made comparable to (if not smaller than) $I_{r1}$. Indeed, we have for every $p\ge 0$   
   \begin{align}
       I_h(\lambda) = \left( \frac{i}{\sigma K(\lambda) \omega}\right)^p f(X(\lambda),\lambda) \int_{\Rm} e^{i\omega\sigma K(\lambda)\frac{\eta^2}{2}} \left[ \pdr{}{\eta} \circ \frac{1}{\eta} \right]^p [1-\chi](\eta)\ d\eta,
       \label{eq:I_h2}
   \end{align}
   where we have used $p$ integrations by parts. From equation \eqref{eq:I_h2} with $p=1$, we obtain a bound of the form 
   \begin{align*}
       |I_{r2}(\lambda)| \le C |I_{r1}(\lambda)|, \quad \lambda\in B,
   \end{align*}
   where the constant depends only on the features of the function $\chi$. As a conclusion, setting $I_r := I_{r1} + I_{r2}$, it is clear that $I_r$ satisfies bound \eqref{eq:bound_I_g}. The proof is complete.  
\end{proof}

\subsection{Main outline and proof}
In order to prove theorem \ref{thm:iterative}, we must bound the operators $R_1^{\omega,b}$ in \eqref{eq:rem1} and $R_2^{\omega,b}$ in \eqref{eq:rem2} and make them contraction operators in some spaces to be defined. This is achieved by doing stationary-phase analyses of the compositions of operators instead of bounding each operator separately as was done when proving theorem \ref{thm:straight_inversion}.

The proposition below bounds the error operator $R_1^{\omega,b}$ in \eqref{eq:rem1} and shows how close the operator $\widetilde{T_1^\omega}^{-1,b} T_1^\omega$  comes to providing $[\rho k]_b$.
\begin{proposition}\label{prop:singlescat_inversion_operator}
    The operator $R_1^{\omega,b}[k]$ is bounded in $\mL(L^2(B_{r-2D}))$ and $\mL(L^\infty(B_{r-2D}))$ and satisfies the following bounds for $\omega\ge 1$
    \begin{align}
	\|R_1^{\omega,b} \|_{\mL(L^2(B_{r-2D}))} &\le \frac{C_1}{\omega} (b + b^4) \label{eq:l2_bound} \\
	\|R_1^{\omega,b} \|_{\mL(L^\infty(B_{r-2D}))} &\le \frac{C_1}{\omega} (b^2 + b^5) \label{eq:linf_bound}.
    \end{align}
\end{proposition}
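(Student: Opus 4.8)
The plan is to unfold the composition $\widetilde{T_1^\omega}^{-1,b}\circ T_1^\omega$ into a multiple oscillatory integral and run the stationary‑phase analysis in the \emph{Radon variable} $s$, keeping the unknown $k$ \emph{outside} that integral as a passive $L^p$‑multiplier. This ordering is essential: expanding $T_1^\omega[k]$ first (as in Theorem~\ref{thm:dec2d}) and only afterwards applying $\widetilde{T_1^\omega}^{-1,b}$ forces derivatives to land on $k$, whereas doing the $s$‑phase first keeps $k(z)$ untouched, which is exactly why the final bounds involve $\|k\rho\|_\infty$ but not the derivatives of $k$.

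First I would record the reduction $R_1^{\omega,b}[k]=\widetilde{T_1^\omega}^{-1,b}T_1^\omega[k]-[k\rho]_b$: since $\supp(\rho k)=\supp k\subset B_{r-2D}$, the transform $P[\rho k](s,\theta)$ vanishes for $|s|\ge r-2D$, so $\chi P[\rho k]=P[\rho k]$ and $\widetilde{T_1^\omega}^{-1,b}\widetilde{T_1^\omega}[k]=\sqrt\omega\,P^{-1,b}\big[(A^\omega)^{-1}\chi\,\tfrac{1}{\sqrt\omega}A^\omega P[\rho k]\big]=P^{-1,b}P[\rho k]=[k\rho]_b$. Next, write $T_1^\omega[k](s,\theta)=\int_X e^{-i\omega\psi_s(z)}a_1(s,\theta,z)\,k(z)\,dz$ with $\psi_s(z):=|z-x_0(s,\theta)|+|z-x_c(s,\theta)|$ and $a_1$ a smooth amplitude (attenuations, geometric Jacobians, $\varphi(\cdot,\cdot)$) bounded with its derivatives on $\{|s|\le r-D\}\times\supp k$ (multiplying it by a fixed cutoff equal to $1$ on $\supp k$ is harmless). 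Using $P^{-1,b}=P^\sharp[w_b\stackrel{s}{\star}\cdot]$ and the explicit form \eqref{eq:a_omega_radon}, which gives $(A^\omega)^{-1}(s,\theta)=e^{i\omega d_0(s)}\tilde c(s,\theta)$ with $\tilde c$ smooth, bounded, non‑vanishing and $\omega$‑independent, the factor $e^{i\omega d_0}$ cancels part of the single‑scattering phase and one gets
\[
\widetilde{T_1^\omega}^{-1,b}T_1^\omega[k](x)=\sqrt\omega\int_{\Sone}\!d\theta\int_X\!dz\;k(z)\,\mathcal I(\theta,z,x),\qquad
\mathcal I(\theta,z,x):=\int_\Rm\! w_b(\xperp-s)\chi(s)\tilde c(s,\theta)a_1(s,\theta,z)\,e^{i\omega\Psi_\theta(s,z)}\,ds,
\]
with net Radon‑variable phase $\Psi_\theta(s,z):=d_0(s)-\psi_s(z)=2\sqrt{r^2-s^2}-\psi_s(z)\le0$.

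The core step is to apply Lemma~\ref{lemma:aux} to $\mathcal I$, with $s$ the integration variable and $(\theta,z,x)$ parameters. For fixed $(\theta,z)$ the triangle inequality shows $\Psi_\theta(\cdot,z)$ has a unique critical point $s^\star(\theta,z)=z\cdot\hat\theta^\perp$ (the chord through $z$), a maximum with $\Psi_\theta(s^\star,z)=0$, and a direct computation gives $\partial_{ss}^2\Psi_\theta(s^\star,z)=-\frac{2\sqrt{r^2-(z\cdot\hat\theta^\perp)^2}}{r^2-|z|^2}$, bounded away from $0$ and $\infty$ on $\supp k$ by hypothesis~\ref{hyp:supportk}. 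To obtain \eqref{eq:condition_phi} on the whole $s$‑support I would insert a fixed cutoff $\zeta(s-s^\star)$ localizing near $s^\star$ (where $|\partial_{ss}^2\Psi_\theta|$ stays in a fixed interval); on the complement $|\partial_s\Psi_\theta(\cdot,z)|$ is bounded below, so repeated integration by parts in $s$ produces an $O(b^N\omega^{-N})$ contribution, negligible for $N\ge2$. Lemma~\ref{lemma:aux} then yields $\mathcal I=I_0+I_r$ with
\[
I_0=e^{-i\pi/4}\Big(\tfrac{2\pi}{\omega}\Big)^{1/2}\frac{w_b(\xperp-z\cdot\hat\theta^\perp)\,\chi(z\cdot\hat\theta^\perp)\,\tilde c(z\cdot\hat\theta^\perp,\theta)\,a_1(z\cdot\hat\theta^\perp,\theta,z)}{|\partial_{ss}^2\Psi_\theta(s^\star,z)|},\qquad
|I_r|\le\frac{C_r}{\omega^{3/2}}\sum_{i=0}^{3}\big\|\partial_s^i\big(w_b(\xperp-\cdot)\,\chi\tilde c\,a_1\big)(\cdot;\theta,z,x)\big\|_{L^1(\Rm)}.
\]
Because $z\in\supp k$ forces $|z\cdot\hat\theta^\perp|<r-2D$ one has $\chi(z\cdot\hat\theta^\perp)=1$, and inserting the Hessian just computed together with \eqref{eq:a_omega_radon} and the form of $a_1$ (equivalently: $I_0$ is precisely the $\omega$‑independent leading term of the asymptotic analysis of \cite{BJLM}) gives the consistency identity $\sqrt\omega\int_{\Sone}\!\int_X k(z)I_0\,dz\,d\theta=\int_{\Sone}\!\int_X w_b(\xperp-z\cdot\hat\theta^\perp)\rho(z)k(z)\,dz\,d\theta=[k\rho]_b(x)$. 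Hence $R_1^{\omega,b}[k](x)=\sqrt\omega\int_{\Sone}\!\int_X k(z)\,I_r(\theta,z,x)\,dz\,d\theta$: an integral operator in $k$ whose kernel carries \emph{no} derivative of $k$.

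It remains to estimate this kernel. In $|I_r|$ each $s$‑derivative falls either on $w_b$, contributing $\|w_b^{(j)}\|_{L^1}=b^{1+j}\|w_1^{(j)}\|_{L^1}$ ($j\le3$), or on the $\omega$‑independent smooth factors $\chi,\tilde c,a_1$, contributing $O(1)$; moreover $w_b$ and its derivatives are concentrated on the scale $1/b$ about the origin, so for each $\theta$ the kernel is supported, up to rapidly decaying tails, in a tube of width $\sim1/b$ about $\{z\cdot\hat\theta^\perp=\xperp\}$. Integrating out $\theta$ and $z$, using $\sqrt\omega\cdot\omega^{-3/2}=\omega^{-1}$, and bounding the resulting (essentially backprojected) kernel by $\sup_x\|\cdot\|_{L^1_z}$ gives the $\mathcal L(L^\infty(B_{r-2D}))$ bound $\tfrac{C_1}{\omega}(b^2+b^5)$; the $\mathcal L(L^2(B_{r-2D}))$ bound follows from a Schur‑type argument and is smaller by one power of $b$ because the filter $I^{-1,b}$ of \eqref{eq:reg_FBP2} has $L^2$‑operator norm $\le b$ (its symbol $\hat\Phi(|\xi|/b)|\xi|$ is $\le b$ on its support), yielding $\tfrac{C_1}{\omega}(b+b^4)$; boundedness in both spaces comes out along the way. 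The main obstacle is not the algebra but the \emph{global} control of $\Psi_\theta$ as a function of $s$ — establishing that $s^\star$ is its only critical point on the relevant compact range and quantifying $|\partial_s\Psi_\theta|$ away from it, so that the localization/non‑stationary‑phase split is legitimate — together with the careful bookkeeping of the powers of $b$ (from $w_b$, its derivatives and the backprojection) and of $\omega$ (from the second‑order stationary phase) needed to reach the stated exponents.
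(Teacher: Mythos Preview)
Your route is genuinely different from the paper's and the core idea is sound, but the last paragraph does not deliver the stated powers of $b$.

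\medskip
\noindent\textbf{Comparison of approaches.} The paper uses the representation $P^{-1,b}=\tfrac{1}{4\pi}I^{-1,b}\circ P^\sharp$: it first backprojects (setting $s=\yperp$), passes to Fourier in $x$ to isolate $\mF[\rho k](\xi)$, and then applies Lemma~\ref{lemma:aux} in the \emph{spatial} variable $v$ (with $x=y+u\hat\theta+v\hat\theta^\perp$). The outcome is a symbol decomposition $a=a_0+a_r$ with $a_0=4\pi/|\xi|$ and $|a_r|\le C\omega^{-1}(1+|\xi|^3)$; the $b$-dependence then enters \emph{only} through the Fourier cutoff $\hat\Phi(|\xi|/b)|\xi|$, which is why the $L^2$ bound (Plancherel) and the $L^\infty$ bound (Cauchy--Schwarz, picking up one extra power of $b$ from the $2$D volume $|\xi|\le b$) come out as $(b+b^4)$ and $(b^2+b^5)$ respectively. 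Your approach instead keeps $w_b$ inside the $s$-integral and runs Lemma~\ref{lemma:aux} in $s$; this is exactly the mechanism the paper uses for the \emph{multiple} scattering term (Proposition~\ref{prop:beta_bound}, where $\partial^2_{ss}\varphi_2$ is controlled on $\mZ_D$), specialised here to $x_1=x_m=z$. Your identification of $s^\star=z\cdot\hat\theta^\perp$, of $\partial^2_{ss}\Psi_\theta(s^\star,z)$, and of the consistency identity $\sqrt\omega\!\int\!\int k(z)I_0\,dz\,d\theta=[k\rho]_b$ are all correct. Incidentally, the cutoff $\zeta(s-s^\star)$ is unnecessary: the bounds \eqref{eq:bound_phi_ss} on $\partial^2_{ss}\varphi_2$ hold globally on $\mZ_D$ (with $x_1=x_m=z\in B_{r-2D}$), so Lemma~\ref{lemma:aux} applies on the full support.

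\medskip
\noindent\textbf{The gap.} The problem is the $b$-accounting after Lemma~\ref{lemma:aux}. With the amplitude $f(s)=w_b(\xperp-s)\chi(s)\tilde c(s,\theta)a_1(s,\theta,z)$, the bound \eqref{eq:bound_I_g} gives
\[
|I_r(\theta,z,x)|\ \le\ \frac{C_r}{\omega^{3/2}}\sum_{i=0}^{3}\|\partial_s^i f\|_{L^1_s}\ \le\ \frac{C}{\omega^{3/2}}\sum_{j=0}^{3}\|w_b^{(j)}\|_{L^1}\ \le\ \frac{C}{\omega^{3/2}}\,(b+b^4),
\]
\emph{uniformly} in $(\theta,z,x)$; there is no tube-localisation in $z$ coming out of \eqref{eq:bound_I_g}, because the lemma only returns $L^1_s$-norms of the amplitude, not pointwise decay. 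Integrating $|k(z)|\,|I_r|$ over $z\in X$ and $\theta\in\Sone$ and multiplying by $\sqrt\omega$ yields
\[
\|R_1^{\omega,b}[k]\|_{L^\infty}\ \le\ \frac{C}{\omega}\,(b+b^4)\,\|k\|_{\infty},
\]
i.e.\ $(b+b^4)$, not the $(b^2+b^5)$ you claim. Your attempted explanation for the extra power --- invoking that ``$I^{-1,b}$ has $L^2$-operator norm $\le b$'' --- is circular: you have already absorbed the filter into $w_b$ inside $\mathcal I$, so you cannot apply $I^{-1,b}$ a second time. For the same reason your $L^2$ argument does not go through as written. In short, your physical-space route proves a bound of the right $\omega^{-1}$ order but with different $b$-exponents than the proposition; to obtain exactly \eqref{eq:l2_bound}--\eqref{eq:linf_bound} you would have to decouple $b$ from the stationary-phase step, which is precisely what the paper's Fourier-symbol approach achieves.
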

Let us now introduce the operator $L$ (first introduced in \cite{BLM}) defined as:
\begin{align}
    Lu(x) = \int_X \frac{u(y)E(x,y)}{|x-y|}\ dy.
    \label{eq:L_op}
\end{align}
It is proved in \cite{BLM} for instance that $L$ is well-defined in $\mL(L^p), \,p\in [1,\infty]$. Its operator norm in each of these spaces, denoted by $\|L\|_p$, satisfies the estimate
\begin{align*}
    \|L\|_p \le \olL := \sup_{x\in X} \int_X \frac{E(x,y)}{|x-y|}\ dy \le 2\pi\diam.
\end{align*}

The next proposition controls the second error operator $R_2^{\omega,b}$ \eqref{eq:rem2}, provided that the scattering coefficient is uniformly bounded by $(\|\phi\|_\infty \|L\|_\infty)^{-1}$.
\begin{proposition}\label{prop:rem_mscat}
    Let $0 < K_0< (\|\phi\|_\infty \|L\|_\infty)^{-1}$ and $k, \tilde k\in B_{K_0}(L^\infty(B_{r-2D}))$. Then, there exists a constant $C_2$ that depends on $\|\sigma\|_{\C^2}$, $K_0$ and $\diam$ such that, we have 
    \begin{align}
	\left\|R_2^{\omega,b}[k] - R_2^{\omega,b}[\tilde k]\right\|_\infty \le C_2 \frac{b^3}{\omega^{\frac{1}{2}}}\log\left( \frac{\omega}{b} \right) \|k-\tilde k\|_\infty.
	\label{eq:rem_mscat}
    \end{align}
\end{proposition}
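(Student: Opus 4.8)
The plan is to reduce the estimate on $R_2^{\omega,b}[k]-R_2^{\omega,b}[\tilde k]$ to a chain of operator bounds, peeling off the linear parts first and postponing the genuinely oscillatory analysis to a separate stationary-phase proposition. Recall from \eqref{eq:rem2} that $R_2^{\omega,b}[k] = \widetilde{T_1^\omega}^{-1,b} T_{2+}^\omega[k]$, and that by \eqref{eq:sol_decomp} the multiple-scattering part of the solution is $(I-\mK_\omega)^{-1}\mK_\omega^2 J_\omega\delta_{x_0}$, so $T_{2+}^\omega[k]$ is obtained by applying the outgoing angular-averaging trace to this term. I would first exploit the resolvent identity
\begin{align*}
    (I-\mK_\omega[k])^{-1} - (I-\mK_\omega[\tilde k])^{-1} = (I-\mK_\omega[k])^{-1}(\mK_\omega[k]-\mK_\omega[\tilde k])(I-\mK_\omega[\tilde k])^{-1},
\end{align*}
together with the telescoping of $\mK_\omega^2 J_\omega$ as a function of $k$, to write $T_{2+}^\omega[k]-T_{2+}^\omega[\tilde k]$ as a finite sum of terms each of which is \emph{linear} in $(k-\tilde k)$ and depends on $k,\tilde k$ only through uniformly bounded resolvent/scattering factors. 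The subcriticality hypothesis $K_0 < (\|\phi\|_\infty\|L\|_\infty)^{-1}$ is exactly what is needed to bound these resolvents on $L^\infty$ uniformly in $\omega$ (the phase $e^{-i\omega t}$ only helps), via the operator $L$ in \eqref{eq:L_op} and the estimate $\|L\|_\infty \le 2\pi\diam$.

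The next step is to identify, among these linear-in-$(k-\tilde k)$ pieces, which carry oscillation and which do not. The ``outer'' factor $\widetilde{T_1^\omega}^{-1,b}$ contributes a prefactor $\sqrt\omega$ through \eqref{eq:inversion_operator}; the ballistic-type exponentials $e^{-i\omega\tau}$ inside $J_\omega$ and $\mK_\omega$ and the explicit $e^{-i\omega d_0}$ inside $(A^\omega)^{-1}$ (see \eqref{eq:a_omega_radon}) combine into a net oscillatory phase along the composed path. I would organize the composition $\widetilde{T_1^\omega}^{-1,b}\circ(\text{linear factor})$ so that, after performing the $s$-integration coming from $P^\sharp$ and the $t$-integrations coming from the two copies of $\mK_\omega$, one is left with an oscillatory integral whose phase is the total path-length function $(y,z)\mapsto |x-y| + |y-z| + (\text{boundary terms})$. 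The claim of the proposition is that this oscillatory integral contributes a factor $\omega^{-1}\log(\omega/b)$, which against the $\sqrt\omega$ from the inversion and the powers of $b$ from $P^\sharp w_b$ (one factor $2\pi b\|w_1\|_{L^1}$ from \eqref{eq:reg_FBP_estimate}, plus further powers of $b$ from differentiating $w_b$ as in \eqref{eq:wbidentity}) yields the stated $b^3\omega^{-1/2}\log(\omega/b)$.

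The stationary-phase content is exactly where Proposition~\ref{prop:beta_bound} comes in, as announced in the roadmap; so the strategy is to \emph{reduce} the present proposition to that one. Concretely, I would show that the remaining oscillatory integral, after the reductions above, matches (up to uniformly bounded amplitudes and the harmless $\sqrt\omega$ and $b$-power bookkeeping) the integral $\beta$ analyzed in Proposition~\ref{prop:beta_bound}, and then simply invoke its bound. The main obstacle, and the reason the logarithm appears, is the degeneracy of the phase: the critical set of the path-length functional $|x-y|+|y-z|$ is not a nondegenerate point but a caustic (it is degenerate when $x$, $y$, $z$ become collinear, which is precisely the set on which the single-scattering singularity lives), so a naive non-stationary-phase bound loses too much and a naive stationary-phase bound does not apply. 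The careful treatment near this caustic — localizing into a region of size $\sim\omega^{-1/2}$ around collinearity and handling the transverse and longitudinal variables on different footings, using Lemma~\ref{lemma:aux} to quadratize the phase in the good direction while tracking the $\log$ from the bad direction — is the technical heart, and I would defer it to Proposition~\ref{prop:beta_bound}. The $L^\infty\to L^\infty$ (rather than $L^2$) framework is convenient here because it lets us avoid any regularity hypothesis on $k$: the bound on the linear factor passes through $\|k-\tilde k\|_\infty$ with the $\log$ absorbed into the constant $C_2$, and no derivatives of $k$ are ever integrated by parts.
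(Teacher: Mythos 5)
Your overall architecture matches the paper's: expand $T_{2+}^\omega$ as the Neumann series $\sum_{m\ge 2}T_m^\omega$, telescope the product of $k$-factors so that the difference is linear in $k-\tilde k$ with the remaining factors bounded by $K_0$, use the subcriticality $K_0\|\phi\|_\infty\|L\|_\infty<1$ to sum the series geometrically, and defer all oscillation to the kernel $\beta^\omega$ analyzed in Proposition \ref{prop:beta_bound}. (The paper telescopes term by term via the identity \eqref{eq:ajbj_property} rather than through a resolvent identity, but this is cosmetic; the term-by-term form is what lets one isolate the endpoints $x_1,x_m$, the only variables the phase $\varphi_2$ depends on, and push everything else into the iterated kernels $p_m$.)

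There is, however, a genuine gap in your final step, and it is exactly where the logarithm lives. You attribute the $\log(\omega/b)$ to the caustic analysis inside the oscillatory integral, "localizing into a region of size $\sim\omega^{-1/2}$ around collinearity." That is not what happens: Proposition \ref{prop:beta_bound} produces \emph{pure power} pointwise bounds with no logarithm, namely $|\beta_1^\omega|\lesssim b^3\omega^{-1}|x_1-x_m|^{-1/2}$ and $|\beta_2^\omega|\lesssim b^{5/2}\omega^{-1}|x_1-x_m|^{-1}$, and the angular localization in Lemma \ref{lemma:S} is at a fixed scale $\delta_0$ independent of $\omega$ and $b$. The problem then arises when you integrate these bounds against the multiple-scattering kernels: for $m=2$ the kernel $p_2(x_1,x_2)\le|x_2-x_1|^{-1}$ multiplied by the $|x_1-x_2|^{-1}$ singularity of $\beta_2^\omega$ gives an integrand $\sim|x_1-x_2|^{-2}$, which is \emph{not integrable} in two dimensions — so "simply invoking the bound" from Proposition \ref{prop:beta_bound} does not close the estimate. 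The paper's fix is to carry \emph{two} bounds for the near-critical contribution $J_{2,0,2}$ (a crude one, $|J_{2,0,2}|\le Cb^2$ with no singularity, and the integrated-by-parts one, $\le Cb^3\omega^{-1}|x_1-x_m|^{-2}$, whose geometric mean gives \eqref{eq:bound_212_3}), then to split the $x_m$-integration at $|x_1-x_m|=\varepsilon$ with $\varepsilon=(b/\omega)^{1/2}$, using the non-singular bound inside and the singular one outside; optimizing over $\varepsilon$ is what produces $\log(\omega/b)$. Without this interpolation-and-cutoff step your argument produces a divergent integral at $m=2$, so you should add it explicitly rather than absorb it into the deferred proposition.
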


\begin{remark}
    The boundedness conditions for $k,\tilde k$ in proposition \ref{prop:rem_mscat} are used to ensure that the scattering series converges. They are sufficient but not necessary. A less stringent condition would be to impose that the operators $L_{\sigma,k}$ and $L_{\sigma,\tilde k}$ have operator norms in $\L(L^p(X\times\Sone))$ bounded by  $1$, where we have defined
    \begin{align*}
	L_{\sigma,k} f(x,v) = \int_X k(x') \frac{E(x',x)}{|x-x'|} \phi(\widehat{x-x'},v) f(x', \widehat{x-x'})\ dx, \quad (x,v)\in X\times\Sone. 
\end{align*}
For the sake of clarity, however, we do not consider such a proof here. 
\end{remark}

The proofs of propositions \ref{prop:singlescat_inversion_operator} and \ref{prop:rem_mscat} above are given consecutively in the next two subsections. We now prove theorem \ref{thm:iterative}.
 
\begin{proof}[Proof of theorem \ref{thm:iterative}]
    Let us fix a reconstruction bandwidth $b$ and $0 < K_0< (\|\phi\|_\infty \|L\|_\infty)^{-1}$. Applying the inversion to the data yields the equality
    \begin{align}
	\widetilde{T_1^\omega}^{-1,b} \mD^\omega = [k\rho]_b + R_1^{\omega,b}[k\rho] + R_2^{\omega,b}[k\rho]:= [k\rho]_b + R^{\omega,b}[k\rho].
	\label{eq:krhob}
    \end{align}
    The introduction of $\rho$ into the argument of the remainder operators may change the constants $C_1$, $C_2$ of \eqref{eq:linf_bound} and \eqref{eq:rem_mscat} but causes no trouble since $\rho$ is bounded from above and away from zero on $B_{r-2D}$. The bounds \eqref{eq:linf_bound} and \eqref{eq:rem_mscat} decrease to zero as $\omega\to\infty$ so that there exists $\omega_0$ such that for $\omega\ge\omega_0$, we have 
    \begin{align*}
	\frac{C_1}{\omega} (b^2 + b^5) +  C_2 \frac{b^3}{\omega^{\frac{1}{2}}}\log\left( \frac{\omega}{b} \right) = c_1 < 1.
    \end{align*}
    Thus, when $\omega\ge\omega_0$, the operator $R^{\omega,b} = R_1^{\omega,b} + R_2^{\omega,b}$ is a contraction on $B_{K_0}(L^\infty(B_{r-2D}))$. We must now show that the mapping $\tilde k\mapsto \widetilde{T_1^\omega}^{-1,b} \mD^\omega - R^{\omega,b}(\tilde k)$ is a contraction in $B_{K_1}(L^\infty(B_{r-2D}))$ for some $0<K_1\le K_0$. In order to do so, we write, using \eqref{eq:krhob}, 
    \begin{align*}
	\widetilde{T_1^\omega}^{-1,b} \mD^\omega - R^{\omega,b}[\tilde k] = [k\rho]_b + R^{\omega,b}[k\rho] - R^{\omega,b}[\tilde k].
    \end{align*}
    For a given $x\in B_{r-2D}$, we have
    \begin{align*}
	|[k\rho]_b(x)| &= \left|\int_{\Rm^2} W_b(y-x) [k\rho](y)\ dy\right| \le \|W_b\|_1 \|k\rho\|_\infty, \where \\
	W_b (x) &= \int_{\Rm^2} e^{ix\cdot\xi} \hat\Phi\left( \frac{|\xi|}{b} \right)\ d\xi = b^2 W_1 (bx), \quad\text{ thus }\quad \|W_b\|_{L^1} = \|W_1\|_{L^1},
    \end{align*}
    and so we obtain the bound
    \begin{align*}
	\|\widetilde{T_1^\omega}^{-1,b} \mD^\omega - R^{\omega,b}(\tilde k)\|_\infty \le (\|W_1\|_{L^1} + c_1) \| k\rho \|_\infty + c_1 \|\tilde k\|_\infty.
    \end{align*}
    Therefore, setting $K_1:=\frac{K_0(1-c_1)}{(\|W_1\|_1 + c_1)}$ and assuming that $\| k\rho \|_\infty\le K_1$, this ensures that $\|\tilde k\|_\infty\le K_0$ implies $\|\widetilde{T_1^\omega}^{-1,b} \mD^\omega - R^{\omega,b}(\tilde k)\|_\infty\le K_0$. Hence, whenever $\| k\rho \|_\infty\le K_1$, the iterative scheme \eqref{eq:iterated_scheme} converges to a limit point $k^\star\in B_{K_0}(L^\infty(B_{r-2D}))$ that satisfies
    \begin{align}
	\widetilde{T_1^\omega}^{-1,b} \mD^\omega = k^\star + R^{\omega,b}[k^\star].
	\label{eq:kstar}
    \end{align}
    Subtracting \eqref{eq:kstar} from \eqref{eq:krhob} yields
    \begin{align*}
	k^\star - [k\rho]_b = R^{\omega,b}[k\rho] - R^{\omega,b}[k^\star].
    \end{align*}
    Thus, we have
    \begin{align*}
	\|k^\star - [k\rho]_b\|_\infty &\le \|R^{\omega,b}[k\rho] - R^{\omega,b}[k^\star]\|_\infty \le c_1 \|k\rho - k^\star\|_\infty \\
	&\le c_1 \|k\rho - [k\rho]_b\|_\infty + c_1\|k^\star - [k\rho]_b\|_\infty, \qhenceq\\
	\|k^\star - [k\rho]_b\|_\infty &\le \frac{c_1}{1-c_1} \|k\rho - [k\rho]_b\|_\infty.
    \end{align*}
    Theorem \ref{thm:iterative} is proved. 
\end{proof}

\subsection{Proof of proposition \ref{prop:singlescat_inversion_operator}}

The operator $R_1^{\omega,b}$ is linear in $k$, and we now exhibit its Schwartz kernel. We recall that
\begin{align*}
    T_1^\omega[k](s,\theta) = \int_{\Rm^2} e^{-i\omega(|x-x_0|+|x-x_c|)} \phi(\widehat{x-x_0},\widehat{x_c-x}) E(x_0,x,x_c) \frac{|\widehat{x-x_0}\cdot\nu_{x_0}||\widehat{x-x_c}\cdot\nu_{x_c}|}{|x-x_0||x-x_c|\rho(x)} [\rho k](x)\ dx.
\end{align*}
We now apply to it the inverse operator \eqref{eq:inversion_operator}, using the expression \eqref{eq:reg_FBP2} of the FBP operator. We first have
\begin{align}
    &(A^\omega)^{-1} \chi T_1^\omega[k](s,\theta) = \sqrt{\frac{i}{2\pi}} \int_{\Rm^2} e^{i\omega\varphi_1(s,\theta,x)} c(s,\theta,x) [\rho k](x)\ dx,\where \nonumber \\
    \varphi_1(s,\theta,x) &:= |x_c-x_0| - |x-x_0| - |x-x_c|, \label{eq:phi1}\\
    c(s,\theta,x) &:= \chi(s) \chi(|x|) \frac{\varphi(\widehat{x-x_0},\widehat{x_c-x})}{\varphi(\hat\theta,\hat\theta)} \frac{E(x_0,x,x_c)}{E(x_0,x_c)} \frac{|\widehat{x-x_0}\cdot\nu_{x_0}||\widehat{x-x_c}\cdot\nu_{x_c}|}{|x-x_0||x-x_c|\rho(x)} \frac{\sqrt{2}r^2}{(r^2-s^2)^{\frac{3}{4}}}. \label{eq:c_function}
\end{align}
Applying now a backprojection and exchanging the integral signs, we write
\begin{align}
    \sqrt{\omega} P^\sharp[(A^\omega)^{-1} \chi T_1^\omega[k]](y) 
    &= \mF^{-1} \left[ a(y,\xi,\omega) \mF[\rho k] \right](y),\where \nonumber \\
    a(y,\xi,\omega) &:= \sqrt{\frac{i\omega}{2\pi}} \int_{\Rm^2} \int_\Sone e^{i(x-y)\cdot\xi} e^{i\omega\varphi_1(\yperp,\theta,x)} c(\yperp,\theta,x)\ d\theta\ dx. \label{eq:a_symbol}
\end{align}
Finally applying the operator $I^{-1,b}$ in \eqref{eq:reg_FBP2}, the operator $\widetilde{T_1^\omega}^{-1,b} T_1^\omega[k]$ can be expressed as
\begin{align}
    \widetilde{T_1^\omega}^{-1,b} T_1^\omega[k] (y) = \frac{1}{16\pi^3} \int_{\Rm^2} e^{iy\cdot\xi} \hat\Phi\left( \frac{|\xi|}{b} \right) |\xi| a(y,\xi,\omega) \mF[\rho k](\xi)\ d\xi,
    \label{eq:prelim_single}
\end{align}
with $a$ defined in \eqref{eq:a_symbol}.

\begin{proof}[Proof of proposition \ref{prop:singlescat_inversion_operator}]
    We now analyze the symbol $a(y,\xi,\omega)$ in \eqref{eq:a_symbol} by studying the behavior of the phase function $\varphi_1$ \eqref{eq:phi1} in terms of $x$. For fixed $y\in B_{r-2D}$ and $\theta\in\Sone$, we have $\nabla_x\varphi_1 = 0$ whenever
    \begin{align*}
	\widehat{x-x_0} + \widehat{x-x_c} = 0,
    \end{align*}
    which happens exactly when $x \in [x_0,x_c]$. It is thus convenient to use the change of variable $x = y + u\hat\theta + v\hat\theta^\perp$ for $(u,v)\in\Rm^2$, and the symbol $a$ becomes
    \begin{align}
	a(y,\xi,\omega) &= \sqrt{\frac{i\omega}{2\pi}} \int_\Sone \int_\Rm e^{iu\xi\cdot\hat\theta} I(y,\xi,u,\theta)\ du\ d\theta, \where \label{eq:symbol2}\\
	I(y,\xi,u,\theta) &= \int_{\Rm} e^{i\omega\varphi_1(\yperp,\theta,u,v)} f_1(v,y,u,\theta,\xi)\ dv \label{eq:Ia} \\
	f_1(v,u,y,\theta) &:= e^{iv\xiperp} c(\yperp,\theta,u,v). \label{eq:f1a}
    \end{align}
    We are now in the setting of section \ref{sec:aux_lemma}, where the amplitude function $f_1$ is supported in 
    \begin{align}
	\supp\ f_1 &\subset\left\{ (v,u,y,\theta)\in\Rm^2\times B_{r-D}\times\Sone \text{ s.t. } x(u,v,y,\theta) \in B_{r-D} \text{ and } |\yperp|\le r-D \right\} \\
	&\subset [-\diam,\diam]\times[-\diam,\diam]\times B_{r-D}\times \Sone.
	\label{eq:domain}
    \end{align}
    On the other hand, the phase function, expressed as  
    \begin{align*}
	\varphi_1 = |x_0-x_c| - \sqrt{v^2 + |Px-x_0|^2} - \sqrt{v^2 + |Px-x_c|^2}, \qquad (Px := y+u\hat\theta)
    \end{align*}
    is proved in appendix \ref{app:phi1} to satisfy 
    \begin{align}
	-\infty<-\Phi_{1,M} \le \pdrr{\varphi_1}{v} \le -\Phi_{1,m} < 0, \quad\text{with}\quad \Phi_{1,m} := \frac{D}{4r^2}, \quad \Phi_{1,M}:= \frac{2}{D},
	\label{eq:d2phi1dv}
    \end{align}
    uniformly on $\supp\ f_1$. Moreover, since $\varphi_1$ is clearly even in $v$, the unique $v$ that satisfies $\partial_v\varphi_1 = 0$ is $v=0$ regardless of all other variables. Hence, section \ref{sec:aux_lemma} gives us the expression of a rigorous decomposition for the integral $I$ in \eqref{eq:Ia}. Note that in this case, we have
    \begin{align*}
	S_1(u,y,\theta) := \varphi_1(\yperp, \theta, u, 0) = 0, &\quad K_1(u,y,\theta) := \partial_{vv}^2\varphi_1|_{v=0} = -d_0 \rho(y+u\hat\theta)^2. \\
	f_1(0,u,y,\theta) &= \sqrt{d_0} \rho(y+u\hat\theta).
    \end{align*}
    Thus the integral $I$ in \eqref{eq:Ia} decomposes into $I_0 + I_r$, where, following formula \eqref{eq:I_f},
    \begin{align*}
	I_0(u,y,\theta) = e^{-i\frac{\pi}{4}} \left( \frac{2\pi}{\omega} \right)^\frac{1}{2} e^{i\omega S_1(u,y,\theta)} \frac{f_1(0,u,y,\theta)}{K_1(u,y,\theta)} = \left( \frac{2\pi}{i\omega} \right)^\frac{1}{2}.
    \end{align*}
    The remainder term $I_r$ satisfies the following bound, following \eqref{eq:bound_I_g}:
    \begin{align}
	|I_r(u,y,\theta)| \le \frac{C_r}{\omega^\frac{3}{2}} \sum_{i=0}^3 \left\|\frac{\partial^i f_1}{\partial v^i} (\cdot,u,v,\theta) \right\|_{L^1(\Rm)}.
	\label{eq:Ig1_bound}
    \end{align}
    From expression \eqref{eq:f1a}, it is clear that for $0\le i\le 3$,
    \begin{align}
	\left\|\frac{\partial^i f_1}{\partial v^i} \right\|_{L^1(\Rm)} \le \sum_{j=0}^i \binom{i}{j} |\xi|^j \left\|\frac{\partial^j c}{\partial v^j} \right\|_{L^1(\Rm)},
	\label{eq:f_1a_c}
    \end{align}
    which in turn implies the bound 
    \begin{align}
	|I_r(u,\theta,y)| \le \frac{C}{\omega^\frac{3}{2}} (1+ |\xi|^3), \quad (u,\theta) \in\supp\ c.
	\label{eq:Ig1_xi}
    \end{align}
    Plugging the decomposition $I = I_0 + I_r$ into expression \eqref{eq:symbol2} yields the following decomposition $a = a_0 + a_r$, where
    \begin{align*}
	a_0(y,\xi,\omega) &:= \sqrt{\frac{i\omega}{2\pi}} \int_{\Sone}\int_\Rm e^{iu\xi\cdot\hat\theta} I_0(u,y,\theta)\ du\ d\theta = \int_{\Sone}\int_{\Rm} e^{iu\xi\cdot\hat\theta} \ du\ d\theta \stackrel{x=u\hat\theta}{=} 2 \int_{\Rm^2} \frac{e^{i x\cdot\xi}}{|x|}\ dx = \frac{4\pi}{|\xi|}. \\
	a_r(y,\xi,\omega) &:= \sqrt{\frac{i\omega}{2\pi}} \int_{\Sone}\int_\Rm e^{iu\xi\cdot\hat\theta} I_r(u,y,\theta)\ du\ d\theta. 
    \end{align*}
    By virtue of bound \eqref{eq:Ig1_xi}, we get the following bound on $a_r$:
    \begin{align}
	|a_r(y,\xi,\omega)| \le \frac{C}{\omega} (1 + |\xi|^3).
	\label{eq:a2_bound}
    \end{align}
    Plugging $a=a_0+a_r$ into expression \eqref{eq:prelim_single} shows that $\widetilde{T_1^\omega}^{-1} T_{1}^\omega[k](y)$ is the sum of two terms, the first one being 
    \begin{align*}
	\frac{1}{16\pi^3} \int_{\Rm^2} e^{iy\cdot\xi} \hat\Phi\left( \frac{|\xi|}{b} \right) |\xi| a_0(y,\xi,\omega) \mF[\rho k](\xi)\ d\xi = \frac{1}{4\pi^2} \int_{\Rm^2} e^{iy\cdot\xi} \hat\Phi\left( \frac{|\xi|}{b} \right) \mF[\rho k](\xi)\ d\xi = [\rho k]_b (y),
    \end{align*}
    and the second one being the error operator
    \begin{align*}
	R_1^{\omega,b}[k](y) = \frac{1}{16\pi^3} \int_{\Rm^2} e^{iy\cdot\xi} \hat\Phi\left( \frac{|\xi|}{b} \right) |\xi| a_r(y,\xi,\omega) \mF[\rho k](\xi)\ d\xi.
    \end{align*}
    Taking $L^2$ norms, we obtain the first bound
    \begin{align*}
	\int_{\Rm^2} |R_1^{\omega,b}[k](y)|^2\ dy &= C_1 \int_{\Rm^2} \hat\Phi\left( \frac{|\xi|}{b} \right)^2 |\xi|^2 |a_r(y,\xi,\omega)|^2 |\mF[\rho k](\xi)|^2\ d\xi \\
	&\le C_1 \frac{C^2}{\omega^2} \int_{\Rm^2} \hat\Phi\left( \frac{|\xi|}{b} \right)^2 (|\xi| + |\xi|^4)^2 |\mF[\rho k](\xi)|^2\ d\xi \\
	&\le C_1 \frac{C^2}{\omega^2} (b + b^4)^2 \int_{\Rm^2} |\mF[\rho k](\xi)|^2\ d\xi,
    \end{align*}
    where we used that $0\le \hat\Phi\le 1$ and that on the support of $\hat\Phi\left( \frac{|\xi|}{b} \right)$, we have $|\xi|\le b$. Using Parseval on the latter right-hand side, this yields the $\mL(L^2)$ continuity inequality 
    \begin{align*}
	\|R_1^{\omega,b}[k]\|_{L^2} \le \sqrt{C_1} \frac{C}{\omega} (b + b^4) \|\rho k\|_{L^2},
    \end{align*}
    and hence the bound in \eqref{eq:l2_bound} since $\rho$ is uniformly bounded on $B_{r-D}$. 

    In order to get an $\mL(L^\infty)$ bound, we have the following estimate
    \begin{align*}
	|R_1^{\omega,b}[k](y)| &\le \frac{C}{16\pi^3 \omega } \int_{\Rm^2} \hat\Phi\left( \frac{|\xi|}{b} \right) (|\xi|+|\xi|^4) |\mF[\rho k](\xi)|\ d\xi \\
	&\le \frac{C}{16\pi^3\omega} (b^2 + b^5) \|\rho k\|_{L^2} \|\hat\Phi\|_{L^2},
    \end{align*}
    where we used the following estimate based on the Cauchy-Schwarz inequality:
    \begin{align*}
	\int_{\Rm^2} |\xi|^q \hat\Phi\left( \frac{|\xi|}{b} \right) |\mF[g](\xi)|\ d\xi \le \|g\|_{L^2}\|\hat\Phi\|_{L^2} b^{q+1}, \quad q\ge 0,
    \end{align*} 
    and hence the bound \eqref{eq:linf_bound}. This completes the proof.
\end{proof}

\subsection{Proof of proposition \ref{prop:rem_mscat}}
For $\alpha$, $\beta$ formal subscripts, denote $v_{\alpha,\beta}:= \widehat{x_\beta-x_\alpha}$. For $m\ge 2$, denoting $\bfx_m = (x_1,\dots,x_m)$ and $d\bfx_m = dx_1\dots dx_m$, the multiple scattering of order $m$ is given by
\begin{align}
    \begin{split}
	T_m^\omega[k](s,\theta) &= \int_{X^m} E^{\omega}(x_m,x_c) \prod_{j=1}^m \frac{k(x_j)\phi(v_{j-1,j}, v_{j,j+1}) E^{\omega}(x_{j-1},x_j)}{|x_{j}-x_{j-1}|} \frac{|\nu_{x_0}\cdot v_{0,1}||\nu_{x_c}\cdot v_{m,c}|}{|x_m-x_c|}\ d\bfx_m.
    \end{split}
    \label{eq:tm}
\end{align}

We now multiply pointwise by $(A^\omega)^{-1}(s,\theta) \chi(s)$ and apply the FBP operator using expression \eqref{eq:reg_FBP}. The operator $\widetilde{T_1^\omega}^{-1,b} T_{m}^\omega$ can be written in the compact way
\begin{align}
    \widetilde{T_1^\omega}^{-1,b} T_{m}^\omega[k] (y) = \sqrt{\omega} \int_{X^m} f_m^\omega[k] (\bfx_m) \beta^\omega (y,x_1,x_2,x_{m-1},x_m)\ d\bfx_m,
    \label{eq:mscat_compact}
\end{align}
where we have defined 
\begin{align*}
    f_m^\omega[k](\bfx_m) = k(x_1) k(x_m) \frac{E^\omega(x_{m-1},x_m)}{|x_m-x_{m-1}|}\prod_{j=2}^{m-1} \frac{k(x_j) E^\omega(x_{j-1},x_j) \phi(v_{j-1,j}, v_{j,j+1})}{|x_j-x_{j-1}|},
\end{align*}
(with the last product replaced by $1$ when $m=2$), as well as 
\begin{align}
    \beta^\omega (y,x_1,x_2,x_{m-1},x_m) &= \int_{\Sone} \int_\Rm e^{i\omega\varphi_2(s,\theta,x_1,x_m)} w_b(\yperp-s) \alpha^\omega(s,\theta,x_1,x_2,x_{m-1},x_m) \chi(s)\ ds\ d\theta, \label{eq:betam} \\
    \alpha^\omega(s,\theta,x_1,x_2,x_{m-1},x_m) &:= (A^\omega)^{-1}(s,\theta) \frac{E(x_0,x_1)E(x_m,x_c)}{|x_1-x_0||x_m-x_c|} \nonumber \\
    &\times |\nu_{x_0}\cdot v_{0,1}||\nu_{x_c}\cdot v_{m,c}| \phi(v_{0,1}, v_{1,2})\phi(v_{m-1,m}, v_{m,c}), \label{eq:alpha_m} \\
    \varphi_2(s,\theta,x_1,x_m) &:= |x_0(s,\theta)-x_c(s,\theta)|-|x_0(s,\theta)-x_1|-|x_c(s,\theta)-x_m|. \label{eq:phi_mscat}
\end{align}

Since we do not assume any regularity on $k$, we cannot use stationary phase with respect to a variable on which $k$ depends. This is why stationary phase arguments will be used to bound $\beta^\omega$ but not the $f_m^\omega[k]$'s.
This requires estimates on $\beta^\omega$ in order to bound the operator $\widetilde{T_1^\omega}^{-1,b} T_{m}^\omega$ using H\" older inequalities. We will use the following:
\begin{proposition}\label{prop:beta_bound}
    The function $\beta^\omega$ admits the decomposition $\beta^\omega = \beta_1^\omega + \beta_2^\omega$, where the functions $\beta_1^\omega$ and $\beta_2^\omega$ satisfy the following bounds for every $(y,x_1,x_2,x_{m-1},x_m)\in (B_{r-D})^{5}$ such that $x_1\ne x_m$:
    \begin{align}
	|\beta_1^\omega(y,x_1,x_2,x_{m-1},x_m)| &\le \overline{\beta_1^\omega}(x_1,x_m) := C_{\beta,1} \frac{b^3}{\omega} \frac{1}{\sqrt{|x_1-x_m|}}, \label{eq:bound_beta1} \\
	|\beta_2^\omega(y,x_1,x_2,x_{m-1},x_m)| &\le \overline{\beta_2^\omega}(x_1,x_m) := C_{\beta,2} \frac{b^\frac{5}{2}}{\omega} \frac{1}{|x_1-x_m|}. \label{eq:bound_beta2}
    \end{align}
\end{proposition}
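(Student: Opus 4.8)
The plan is to analyze the oscillatory integral \eqref{eq:betam} defining $\beta^\omega$ by first carrying out a stationary-phase reduction in the $s$ variable and then treating the remaining $\theta$-integral. Writing $\varphi_2(s,\theta,x_1,x_m) = |x_0-x_c| - |x_0-x_1| - |x_c-x_m|$ with $x_0,x_c$ parametrized as in \eqref{eq:boundary_radon}, I would first establish that $\partial_{ss}^2\varphi_2$ is uniformly bounded above and below away from zero on the support of the amplitude, so that Lemma \ref{lemma:aux} applies in the $s$ variable (with $\lambda = (\theta,x_1,x_2,x_{m-1},x_m)$). The amplitude in that lemma is $f(s,\lambda) = w_b(\yperp - s)\,\alpha^\omega(s,\theta,\cdot)\,\chi(s)$, which because of the factor $w_b$ and the identities \eqref{eq:wbidentity} carries $b$-dependent $L^1$ norms: $\|w_b\|_{L^1}=\|w_1\|_{L^1}$, $\|w_b'\|_{L^1}=b\|w_1'\|_{L^1}$, etc. The main $s$-integral is then replaced, via the critical point $s = S(\lambda)$ of $\varphi_2(\cdot,\lambda)$, by a leading term of size $\omega^{-1/2}$ times $f$ evaluated at $S(\lambda)$, plus a remainder controlled by $\omega^{-1}$ times a sum of $L^1$ norms of $\partial_s^i f$.

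The decomposition $\beta^\omega = \beta_1^\omega + \beta_2^\omega$ should then be organized so that $\beta_1^\omega$ collects the contribution that, after the $s$-reduction, still contains an oscillation in $\theta$ whose phase one can exploit for a second stationary-phase (or non-stationary-phase) gain, while $\beta_2^\omega$ collects the genuinely lower-order remainder pieces from Lemma \ref{lemma:aux}. For $\beta_1^\omega$: after the $s$-integration the residual $\theta$-integral has phase $\omega\,S(\theta;x_1,x_m)$ where $S$ is the critical value $\varphi_2(S(\lambda),\theta,x_1,x_m)$; the geometry of the disk should give $S$ a nondegenerate critical point in $\theta$ whenever $x_1\ne x_m$, with the size of $\partial_{\theta\theta}^2 S$ comparable to $|x_1-x_m|$ — this is the source of the $|x_1-x_m|^{-1/2}$ factor in \eqref{eq:bound_beta1}. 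Applying stationary phase once more in $\theta$ produces an extra $\omega^{-1/2}$, so $\beta_1^\omega$ is $O(\omega^{-1})$; the $b^3$ comes from the combination of $\|w_b\|_1 = O(1)$ against two derivative hits (each $\sim b$) plus the $|\xi|$-type factor $b$ already built into the $\widetilde{T_1^\omega}^{-1,b}$ normalization, which I would track carefully. For $\beta_2^\omega$: the remainder of Lemma \ref{lemma:aux} is $O(\omega^{-1})$ already (before the $\sqrt{\omega}$ prefactor that lives outside in \eqref{eq:mscat_compact}, which is why the stated bounds are $\omega^{-1}$), and bounding the residual $\theta$-integral trivially by $2\pi$ times the sup of the amplitude — now with one fewer power of $b$ available and with a $|x_1-x_m|^{-1}$ blow-up coming from differentiating $|x_0-x_1|$ and $|x_c-x_m|$ in the amplitude near coincident points — gives \eqref{eq:bound_beta2}.

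The hardest part will be the nondegeneracy analysis of the reduced phase $S(\theta;x_1,x_m)$ in the $\theta$ variable and extracting the precise power of $|x_1-x_m|$: the critical point in $\theta$ corresponds geometrically to the line through $x_0,x_c$ being aligned so that single-scattering-like stationarity holds, and as $x_1\to x_m$ the critical point degenerates, so one must quantify $|\partial_{\theta\theta}^2 S| \gtrsim c\,|x_1-x_m|$ uniformly on $(B_{r-D})^5$, handle the case where the $\theta$-critical point is absent (non-stationary-phase, giving faster decay, hence harmless), and make sure the amplitude factors (which also depend on $\theta$ and on the distances $|x_1-x_0|,|x_m-x_c|$, all bounded below on $B_{r-D}$ by hypothesis \ref{hyp:supportk}) do not spoil these estimates. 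I would relegate the explicit computation of $\partial_{ss}^2\varphi_2$ and $\partial_{\theta\theta}^2 S$, together with their bounds, to an appendix analogous to the one cited for \eqref{eq:d2phi1dv}, and invoke the references \cite{A,AKC,P1,P2} for the stationary-phase estimates with caustic-type degeneracies that arise at the boundary $x_1 = x_m$ of the allowed region.
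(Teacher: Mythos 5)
Your first reduction (stationary phase in $s$ via Lemma \ref{lemma:aux}, justified by uniform two-sided bounds on $\partial_{ss}^2\varphi_2$) and your identification of the reduced phase $S_2(\theta)=\varphi_2(\sigma(\theta),\theta)$, whose critical points $\theta_{1m},\theta_{1m}+\pi$ satisfy $|S_2''|\sim|x_1-x_m|$, are exactly what the paper does. The gap is in the treatment of the residual $\theta$-integral. You cannot apply stationary phase ``once more in $\theta$'' globally: $S_2'$ is periodic with two zeros, so $S_2''$ must itself vanish somewhere on $\Sone$ (the paper points this out in a remark in the appendix), and hence no uniform lower bound $|S_2''|\ge C|x_1-x_m|$ holds on all of $\Sone$. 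The second-derivative (van der Corput) estimate of \cite[Corollary 2]{AKC} is only available on neighborhoods of the two critical points. The paper therefore splits the $\theta$-integral with a partition of unity $1=\rho_c+\tilde\rho$: near the critical points, $|S_2''|\ge C_2|x_1-x_m|$ yields the $b^3\,\omega^{-1/2}|x_1-x_m|^{-1/2}$ contribution, which together with the Lemma \ref{lemma:aux} remainder constitutes $\beta_1^\omega$; away from them, only $|S_2'|\ge C_1|x_1-x_m|$ is available, one integration by parts gives the too-singular bound $b^3\,\omega^{-1}|x_1-x_m|^{-2}$, and this must be interpolated (geometric mean) against the trivial bound $Cb^2$ to produce precisely the $b^{5/2}\,\omega^{-1/2}|x_1-x_m|^{-1}$ of \eqref{eq:bound_beta2}. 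This interpolation in the away-region is the missing idea, and it is what forces the two different singularity profiles in the two stated bounds.

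Your proposed decomposition also misidentifies $\beta_2^\omega$. The remainder produced by Lemma \ref{lemma:aux} in the $s$-variable is bounded by $C b(1+b^2)/\omega$ uniformly in $(x_1,x_m)$, with no singularity at $x_1=x_m$; the paper absorbs it into $\beta_1^\omega$, not $\beta_2^\omega$. Moreover, your explanation of the $|x_1-x_m|^{-1}$ factor in \eqref{eq:bound_beta2} as coming from differentiating $|x_0-x_1|$ and $|x_c-x_m|$ in the amplitude near coincident points is incorrect: by Hypothesis \ref{hyp:supportk} these distances are bounded below by $D$ (the sources/detectors lie on $\partial B_r$ while $x_1,x_m\in B_{r-2D}$), so the amplitude $\alpha^\omega$ and its derivatives are uniformly bounded. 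The singularity in $|x_1-x_m|$ is entirely a phase-degeneracy effect. Your bookkeeping of powers of $b$ is also off in places (there is no $|\xi|$ factor in this part of the argument; the $b^3$ arises because $w_b$ is of size $b^2$ in sup norm and one $\theta$-derivative falling on $w_b(\yperp-\sigma(\theta))$ contributes an extra factor of $b$), but that is secondary to the structural issue above.
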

The proof is rather technical and is postponed to section \ref{sec:beta_bound}. We are now in a position to bound the operator $R_2^{\omega,b}$ \eqref{eq:rem2}. 

\begin{proof}[Proof of proposition \ref{prop:rem_mscat}]
    We start from equality \eqref{eq:mscat_compact} and obtain estimates for $f_m^\omega$. Let $k, \tilde k\in L^\infty(B_{r-2D})$. Then using the following property that holds for real numbers $a_1,\dots,a_m,b_1,\dots,b_m$
    \begin{align}
	a_1\cdots a_m - b_1\cdots b_m = \sum_{j=1}^m b_1\cdots b_{j-1} (a_j-b_j) a_{j+1}\cdots a_m,
	\label{eq:ajbj_property}
    \end{align}
    with $a_j = k(x_j), b_j= \tilde k(x_j), \ j=1\dots m$, and the fact that $\max(\|k\|_\infty, \|\tilde k\|_\infty)\le K_0$, we have the following estimate
    \begin{align}
	|f_m^\omega[k](\bfx)-f_m^\omega[\tilde k](\bfx)| \le m K_0^{m-1} \|\phi\|^{m-2} \left[ \prod_{j=2}^m \frac{E(x_{j-1},x_j)}{|x_j-x_{j-1}|} \right] \|k-\tilde k\|_\infty.
	\label{eq:bound_fmk}
    \end{align}
    Using estimates \eqref{eq:bound_beta1} and \eqref{eq:bound_beta2} together with \eqref{eq:bound_fmk}, we have that for $k,\tilde k \in B_{K_0}(L^\infty(B_{r-2D}))$ and $m\ge 2$,
    \begin{align}
	\left\|\widetilde{T_1^\omega}^{-1} T_{m}^\omega[k] - \widetilde{T_1^\omega}^{-1} T_{m}^\omega[\tilde k]\right\|_\infty &\le \sqrt{\omega} K_0 m (K_0 \|\phi\|)^{m-2} \|k-\tilde k\|_\infty [C_{m,1} + C_{m,2}], \label{eq:bound_tmdiff} \\
	\text{where}\quad C_{m,\alpha} &:= \int_{X^2} \overline{\beta_\alpha^\omega} (x_1,x_m) p_m (x_1,x_m)\ dx_1\ dx_m, \quad \alpha=1,2, \label{eq:Cm_expr} \\
	p_m(x_1,x_m) &:= \int_{X^{m-2}} \prod_{j=2}^m \frac{E(x_{j-1}, x_j)}{|x_j-x_{j-1}|}\ dx_2\dots\ dx_{m-1} = L^{m-2}\left[ \frac{E(x_1,\cdot)}{|x_1-\cdot|} \right](x_m). \nonumber
    \end{align}
    In order to bound the $C_{m,\alpha}$'s, we need estimates on $p_m$. We have the following estimates (obtained by bounding the attenuation terms by $1$ and performing polar changes of variable inside the integrals): 
    \begin{align*}
	p_2(x_1, x_2) &\le |x_2-x_1|^{-1}, \text{a.e. } (x_1, x_2), \\
	p_3(x_1, x_3) &\le C_3 |\log(|x_3-x_1|)|, \text{a.e. } (x_1, x_3), \\
	p_m(x_1, x_m) &\le C_4 \|L\|_\infty^{m-4}, \text{a.e. } (x_1,x_m),\ m\ge 4.
    \end{align*}
    Hence by virtue of the bounds \eqref{eq:bound_beta1} and \eqref{eq:bound_beta2}, the only term that requires attention is $C_{2,2}$. The integrals defining all the other $C_{m,\alpha}$ are well-defined. Addressing $C_{2,2}$, we change variables $x_m = x_1 + \rho\hat\alpha$ and split the integral in $\rho$ into two intervals $[0,\varepsilon]$ and $[\varepsilon,\diam]$ and use different bounds for $\overline{\beta_2^\omega}$ as follows
    \begin{align*}
	C_{2,2} &\le C_{\beta,2}\frac{b^2}{\sqrt{\omega}}\left[ \int_X \int_\Sone \int_0^\varepsilon\ d\rho\ d\alpha\ dx_1 + \left( \frac{b}{\omega} \right)^\frac{1}{2} \int_X \int_\Sone \int_\varepsilon^\diam\ \frac{d\rho}{\rho}\ d\alpha\ dx_1 \right] \\
	&\le C_{\beta,2} 2\pi|X| \frac{b^2}{\sqrt{\omega}} \left[ \varepsilon + \left( \frac{b}{\omega} \right)^\frac{1}{2} \left( \log\diam - \log\varepsilon \right) \right] \le C'_{2,2} \frac{b^{\frac{5}{2}}}{\omega}\log\frac{\omega}{b},
    \end{align*}
    if we choose $\varepsilon = \left( \frac{b}{\omega} \right)^\frac{1}{2}$. For the other terms $C_{m,\alpha}$, we obtain bounds of the form 
    \begin{align*}
	C_{2,1} &\le C_{2,1}' \frac{b^3}{\omega}, \qquad C_{3,\alpha} \le C_{3,\alpha}' \frac{b^3}{\omega}, \quad \alpha=1,2 \\
	C_{m,\alpha} &\le C_{4} \frac{b^3}{\omega} \|L\|_\infty^{m-4}, \quad \alpha=1,2,\ m\ge 4.
    \end{align*}
    Hence for $m=2$ we have 
    \begin{align}
	\left\|\widetilde{T_1^\omega}^{-1} T_{2}^\omega[k] - \widetilde{T_1^\omega}^{-1} T_{2}^\omega[\tilde k]\right\|_\infty &\le 2K_0 \left[ C'_{2,1} \frac{b^3}{\sqrt{\omega}} + C'_{2,2} \frac{b^{\frac{5}{2}}}{\sqrt{\omega}} \log\left( \frac{\omega}{b} \right) \right] \|k-\tilde k\|_\infty, 
	\label{eq:bound_m2}
    \end{align}
    while for $m=3$ we have
    \begin{align}
	\left\|\widetilde{T_1^\omega}^{-1} T_{3}^\omega[k] - \widetilde{T_1^\omega}^{-1} T_{3}^\omega[\tilde k]\right\|_\infty &\le 3K_0^2 \|\phi\| [C'_{3,1} + C'_{3,2}] \frac{b^3}{\sqrt{\omega}} \|k-\tilde k\|_\infty.
	\label{eq:bound_m3}
    \end{align}
    Summing inequalities in \eqref{eq:bound_tmdiff} for $m\ge 4$, we get 
    \begin{align}
	\sum_{m=4}^\infty \left\|\widetilde{T_1^\omega}^{-1} T_{m}^\omega[k] - \widetilde{T_1^\omega}^{-1} T_{m}^\omega[\tilde k]\right\|_\infty &\le 2 C_4 \frac{K_0}{\|L\|_\infty^2} \frac{b^3}{\sqrt{\omega}} \|k-\tilde k\|_\infty \sum_{m=4}^\infty m (K_0 \|\phi\|\|L\|_\infty)^{m-2} \nonumber \\
	&\le 2 C_4 \frac{K_0}{\|L\|_\infty^2}\frac{1}{(1-K_0 \|\phi\|\|L\|_\infty)^2} \frac{b^3}{\sqrt{\omega}} \|k-\tilde k\|_\infty,
	\label{eq:bound_m4}
    \end{align}
    where we used the following inequality for $|x|\le 1$:
    \begin{align*}
	\sum_{m=4}^\infty m x^{m-2} \le \frac{1}{(1-x)^2}.
    \end{align*}
    Finally using the fact that
    \begin{align*}
	\left\|R_2^{\omega,b}[k] - R_2^{\omega,b}[\tilde k]\right\|_\infty &\le \sum_{m=2}^\infty \left\|\widetilde{T_1^\omega}^{-1,b} T_m^\omega[k] - \widetilde{T_1^\omega}^{-1,b} T_m^\omega[\tilde k]\right\|_\infty,
    \end{align*}
    we obtain the desired result using bounds \eqref{eq:bound_m2}, \eqref{eq:bound_m3} and \eqref{eq:bound_m4}.
\end{proof}

\subsection{Proof of proposition \ref{prop:beta_bound}} \label{sec:beta_bound}

The function $\beta^\omega$ is an OI in $(s,\theta)$ parameterized by the points $(y,x_1,x_2,x_{m-1},x_m)\in (B_{r-2D})^{5}$, which we now analyze using stationary phase techniques. The only parameters that matter for the analysis are those the phase function $\varphi_2$ in \eqref{eq:phi_mscat} depends upon, that is $x_1$ and $x_m$, and all estimates are uniform in $(y,x_2,x_{m-1})$. Critical points of $\varphi_2$ become degenerate as $|x_1-x_m|\to 0$ and so a careful analysis is necessary to control this degenerate behavior and capture the leading term as $\omega\to\infty$.

Let us analyze the phase function $\varphi_2$ in \eqref{eq:phi_mscat} first and in particular which couples $(s,\theta)$ satisfy $(\partial_s\varphi_2, \partial_\theta\varphi_2)=(0,0)$. Using identities from section \ref{sec:appendix}, the gradient of $\varphi_2$ in the $(s,\theta)$ variables can be written under the form:
\begin{align}
    \partial_s\varphi_2 = \frac{r}{\sqrt{r^2-s^2}} ( f_0 - f_c ), &\qandq \partial_\theta \varphi_2 = r (f_0 + f_c), \where\nonumber \\
    f_0 := (\widehat{x_1-x_0}- \hat\theta)\cdot\widehat{\partial_s x_0}, &\quad f_c := (\widehat{x_c-x_m}-\hat\theta)\cdot\widehat{\partial_s x_c}. \label{eq:f0fc}
\end{align}
Now, since $\sqrt{r^2-s^2}$ is never zero on $\mZ_D$, the system of equations $(\partial_s\varphi_2, \partial_\theta\varphi_2) = (0,0)$ can be recast as
\begin{align*}
    \sqrt{r^2-s^2} \partial_s\varphi_2 \pm \partial_\theta\varphi_2 = 0, 
\end{align*}
which is equivalent to $f_0=f_c=0$. This in turn implies $\widehat{x_1-x_0} = \widehat{x_c-x_m} = \hat\theta$. Hence the critical points of $\varphi_2$ are exactly those for which the points $x_0, x_1, x_m, x_c$ are aligned, that is:
\begin{description}
    \item[if $x_1\ne x_m$,] two distinct points $(s, \theta)\in\{ (x_1\cdot\widehat{\theta_{1m}}^\perp, \theta_{1m}), (x_1\cdot\widehat{\theta_{1m}+\pi}^\perp, \theta_{1m}+\pi) \}$, where we defined 
	\begin{align}
	    \theta_{1m} := \arg(x_m-x_1).
	    \label{eq:theta1m}
	\end{align}
	At these points, we have $\varphi_2 = |x_1-x_m|$ and $-|x_1-x_m|$, respectively.
    \item[if $x_1=x_m$,] a curve in $\mZ_D$ of equation $\{ s = x_1\cdot\hat\theta^\perp \}$, along which $\varphi_2=0$. 
\end{description}
The second case describes the caustic set in the space of parameters, i.e. the parameter values for which the stationary points are degenerate. Since $\varphi_2$ is constant along this curve, it is clear that these stationary points are degenerate, see \cite{P1,P2}. This degenerescence can be observed by noticing that the determinant of the Hessian at the two critical points in the first case is proportional to $|x_1-x_m|$ and hence goes to zero as $|x_1-x_1|\to 0$. 
The degenerate critical points are not of type $A_k$ (as defined in \cite{A}) for any $k\ge 2$, since the phase is locally constant along one direction as one passes through a degenerate critical point. However, because at any of these critical points, $\text{rank } H_{\varphi_2} \ge 1$, the approach of \cite[Section 3]{P2} which we are following here remains efficient in order to control the degenerescence near the caustic set. 

Following this approach, we first need to find a direction along which the second derivative of $\varphi$ is uniformly bounded away from zero. In our case, it is proved in appendix \ref{subapp:dphi2} that for any $(s,\theta)\in\mZ_D$,
\begin{align}
    -\infty< -\Phi_{2,M} \le \pdrr{\varphi_2}{s} (s,\theta) \le -\Phi_{2,m} < 0, \quad\text{with}\quad \Phi_{2,m} = \frac{2D}{r^2} \qandq \Phi_{2,M} = \frac{8r}{D^2}.
    \label{eq:bound_phi_ss}
\end{align}

As a result, for every $\theta\in \S^1$, the function $\partial_s \varphi_2(\cdot,\theta)$ is a strictly decreasing function. Moreover, it is easy to check that $\partial_s\varphi_2(x_1\cdot\hat\theta^\perp,\theta)$ and $\partial_s\varphi_2(x_m\cdot\hat\theta^\perp,\theta)$ have opposite signs. This ensures that for every $\theta\in\Sone$, the equation $\partial_s\varphi_2(s,\theta)=0$ has a unique solution called $\sigma(\theta)$, which satisfies the following bounds
\begin{align}
    \min (x_1\cdot\hat\theta^\perp, x_m\cdot\hat\theta^\perp) \le \sigma(\theta) \le \max(x_1\cdot\hat\theta^\perp, x_m\cdot\hat\theta^\perp).
    \label{eq:bound_sigma}
\end{align}
Except for the simple case where $x_1\cdot\hat\theta^\perp = x_m\cdot\hat\theta^\perp = \sigma(\theta)$ (i.e. $x_0, x_1, x_m, x_c$ are aligned), it appears that the computation of $\sigma(\theta)$, which requires solving the equation $f_0 = f_c$ (with $f_0$, $f_c$ defined in \eqref{eq:f0fc}) leads to quite complicated expressions. It is however important to note that, because $\sigma(\theta)$ satisfies the following ODE (derived from the implicit functions theorem)
\begin{align}
    \frac{d\sigma}{d\theta} = -\frac{\partial^2_{s\theta}\varphi (\sigma(\theta), \theta)}{\partial^2_{ss}\varphi (\sigma(\theta), \theta)},
    \label{eq:ODE_sigma}
\end{align}
and because $\varphi_2$ is smooth with respect to all of its arguments, then $\sigma(\theta)$ is a $\mC^\infty$ function of $\theta$.
For the sequel, we now define the functions
\begin{align}
    S_2(\theta) := \varphi_2(\sigma(\theta),\theta), \quad K_2(\theta) := |\partial_{ss}^2\varphi_2(\sigma(\theta), \theta)| = -\partial_{ss}^2\varphi_2(\sigma(\theta), \theta).
    \label{eq:S_function}
\end{align}
Let us now assume $x_1\ne x_m$. First notice that, using the chain rule as well as relation \eqref{eq:ODE_sigma}, the first two derivatives of $S_2$ \eqref{eq:S_function} are given by
\begin{align*}
    S_2'(\theta) = \partial_\theta\varphi_2(\sigma(\theta), \theta), \qandq S_2''(\theta) = - \frac{\det H_{\varphi_2} (\sigma(\theta), \theta)}{K_2(\theta)}.
\end{align*}
The critical points of $S_2$ (i.e. the zeros of $S_2'$) are $\theta_{1m}$ and $\theta_{1m} +\pi$, at which we have 
\begin{align}
    S_2''(\theta_{1m}) = - \frac{|x_1-x_m||x_0-x_c|}{|x_1-x_0| + |x_c-x_m|}, \qandq S''(\theta_{1m}+\pi) = \frac{|x_1-x_m||x_0-x_c|}{|x_1-x_0| + |x_c-x_m|}.
    \label{eq:det_values}
\end{align}

\begin{proof}[Proof of proposition \ref{prop:beta_bound}]
    Let us denote by 
    \begin{align}
	\beta^\omega(y, x_1,x_2,x_{m-1},x_m) &= \int_\Sone I_2(\theta)\ d\theta, \where\quad I_2(\theta) = \int_{\Rm} e^{i\omega\varphi_2} f(s,\theta) \chi(s)\ ds, \label{eq:I2} \\
	f(s,\theta) &:= \alpha^\omega (s,\theta,x_1,x_2,x_{m-1},x_m) w_b(\yperp-s). \label{eq:fm}
    \end{align}
    The dependency on parameters is made implicit to improve readability. Using the identities \eqref{eq:wbidentity} satisfied by $w_b$ and the fact that $\alpha^\omega$ is smooth, we have that for every $0\le i\le 2$, there exists a constant $C_{f,i}$ such that
    \begin{align}
	\left\| \frac{\partial^i f}{\partial s^i}(\cdot,\theta) \right\|_{L^1(\Rm)} \le C_{f,i} b(1+b^i), \quad \theta\in\Sone. 
	\label{eq:f_norms}
    \end{align}
    Let us focus on the OI $I_2(\theta)$ in \eqref{eq:I2} first. Again, since we have the property \eqref{eq:bound_phi_ss}, we can use the results of section \ref{sec:aux_lemma}. We can decompose $I_2 = I_{2,0} + I_{2,r}$, where following \eqref{eq:I_f}, 
    \begin{align}
	I_{2,0}(\theta) &= e^{-i\frac{\pi}{4}} \left( \frac{2\pi}{\omega} \right)^\frac{1}{2} \frac{f(\sigma(\theta), \theta)}{K_2(\theta)^\frac{1}{2}}. \label{eq:I2f} \\
	|I_{2,r}|(\theta) &\le \frac{C}{\omega} \sum_{i=0}^2 \left\|\frac{\partial^i f}{\partial s^i}(\cdot,\theta)\right\|_{L^1(\Rm)} \le \frac{C_r}{\omega} b (1 + b^2). \label{eq:I2gI2h}
    \end{align}
    Integrating now $I_{2,0}(\theta)$ with respect to $\theta$, we get an oscillatory integral $J_{2,0}:= \int_{\S^1} I_{2,0}(\theta)\ d\theta$ with phase function $S_2(\theta)$. We recall that the critical set of $S_2$ is $\Theta_c = \{\theta_{1m}, \theta_{1m}+\pi\}$. We now claim the following
    \begin{lemma} \label{lemma:S}
	There exist constants $\frac{\pi}{2}> \delta_0 >0$, $C_1>0$ and $C_2>0$ independent of $(x_1,x_m)$ such that the following inequalities hold:
	\begin{align}
	    |S_2'(\theta)| &\ge C_1 |x_1-x_m| \text{ for } \min_{\theta_c\in\Theta_c} |\theta-\theta_c| \ge \frac{\delta_0}{2}, \label{eq:bound_Sp} \\
	    |S_2''(\theta)| &\ge C_2 |x_1-x_m| \text{ for } \min_{\theta_c\in\Theta_c} |\theta-\theta_c| \le \delta_0. \label{eq:bound_Spp} 
	\end{align} 
    \end{lemma}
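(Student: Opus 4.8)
The plan is to prove the two bounds separately. For \eqref{eq:bound_Spp}, recall from \eqref{eq:det_values} that at each critical point $\theta_c\in\Theta_c$ one has $|S_2''(\theta_c)| = \frac{|x_1-x_m||x_0-x_c|}{|x_1-x_0|+|x_c-x_m|}$, and since all points $x_0,x_1,x_m,x_c$ lie in $\overline{B_r}$ with $x_0,x_c$ on $\partial B_r$ at distance $d_0 = 2\sqrt{r^2-s^2}\ge 2\sqrt{rD}$ on $\mZ_D$, the ratio $\frac{|x_0-x_c|}{|x_1-x_0|+|x_c-x_m|}$ is bounded below by a positive constant depending only on $r$ and $D$. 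Hence $|S_2''(\theta_c)|\ge c\,|x_1-x_m|$. To pass from the critical points to a full $\delta_0$-neighborhood, I would use the formula $S_2''(\theta) = -\det H_{\varphi_2}(\sigma(\theta),\theta)/K_2(\theta)$ together with smoothness: write $S_2''(\theta) = S_2''(\theta_c) + (\theta-\theta_c)\int_0^1 S_2'''(\theta_c + t(\theta-\theta_c))\,dt$, observe that $S_2'''$ is bounded uniformly by a constant $M$ (independent of $x_1,x_m$, since $\varphi_2$, $\sigma(\cdot)$ and $K_2(\cdot)$ are smooth with derivatives controlled uniformly on $\mZ_D$ — using \eqref{eq:bound_phi_ss} to bound $1/K_2$), and note that $|x_1-x_m|$ is itself bounded above by $\diam$. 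If $|x_1-x_m|$ is comparable to or larger than a fixed threshold this does not immediately help, but in that regime \eqref{eq:bound_Spp} holds trivially after possibly shrinking $\delta_0$; the delicate case is $|x_1-x_m|$ small, and there choosing $\delta_0$ proportional to the ratio $c/M$ (small, fixed) gives $|S_2''(\theta)|\ge \tfrac{c}{2}|x_1-x_m|$ for $|\theta-\theta_c|\le\delta_0$. One must also check the two $\delta_0$-balls around $\theta_{1m}$ and $\theta_{1m}+\pi$ are disjoint, which holds once $\delta_0<\pi/2$.

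For \eqref{eq:bound_Sp}, I would argue that $S_2'(\theta) = \partial_\theta\varphi_2(\sigma(\theta),\theta) = r(f_0+f_c)$ in the notation of \eqref{eq:f0fc}, and that $S_2'$ vanishes only at $\theta_{1m}$ and $\theta_{1m}+\pi$. Away from these zeros (by at least $\delta_0/2$) one needs a lower bound linear in $|x_1-x_m|$. The natural route is: since $S_2''$ is bounded below in modulus near the critical points by \eqref{eq:bound_Spp}, and $S_2'$ changes sign exactly at the two critical points, $|S_2'|$ grows at least linearly away from each zero on the interval $[\delta_0/2,\delta_0]$, reaching a value $\ge \tfrac{c}{2}|x_1-x_m|\cdot\tfrac{\delta_0}{2}$ at distance $\delta_0$; then on the complement $\min_{\theta_c}|\theta-\theta_c|\ge\delta_0$ — a compact set bounded away from both zeros — I claim $|S_2'|$ is bounded below by $C|x_1-x_m|$ directly. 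For this last claim I would rescale: consider $\psi(\theta) := S_2'(\theta)/|x_1-x_m|$ and show it is bounded below in modulus on $\{\min_{\theta_c}|\theta-\theta_c|\ge\delta_0\}$ uniformly in $(x_1,x_m)$. This follows if $\psi$ extends continuously (in $(\theta,x_1,x_m)$, including in the limit $x_1\to x_m$) to a function that is nonvanishing on that set; in the limit $x_1=x_m$ the phase $\varphi_2$ is radially symmetric and an explicit computation shows $S_2'(\theta)/|x_1-x_m|$ tends to a smooth nonzero multiple of $\sin(\theta-\theta_{1m})$, which is indeed bounded away from zero when $|\theta-\theta_{1m}|$ and $|\theta-\theta_{1m}-\pi|$ are both $\ge\delta_0$. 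A compactness argument over the closed parameter set then yields the uniform lower bound.

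The main obstacle I anticipate is making the uniformity in $(x_1,x_m)$ genuinely rigorous in the degenerate limit $|x_1-x_m|\to 0$: both $S_2'$ and $S_2''$ vanish to first order in $|x_1-x_m|$, so every estimate must be stated in terms of the rescaled quantities $S_2'/|x_1-x_m|$ and $S_2''/|x_1-x_m|$, and one must verify these rescaled functions, together with enough of their $\theta$-derivatives, extend smoothly and nondegenerate­ly across the caustic $x_1=x_m$. Concretely this requires a careful Taylor expansion of $\varphi_2(s,\theta,x_1,x_m)$ in the difference $x_m-x_1$, keeping track that $\sigma(\theta)$ also depends on $(x_1,x_m)$ through \eqref{eq:ODE_sigma}; the bookkeeping is routine but must be done with care to extract the factor $|x_1-x_m|$ cleanly. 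The lower bounds \eqref{eq:bound_phi_ss} on $\partial^2_{ss}\varphi_2$ are what keep $1/K_2(\theta)$ and hence all these rescaled derivatives uniformly controlled, so they will be invoked repeatedly.
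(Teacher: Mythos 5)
Your argument for \eqref{eq:bound_Spp} has a genuine gap at the step where you propagate the lower bound from the critical points to a fixed neighborhood. With only a uniform bound $|S_2'''|\le M$ independent of $(x_1,x_m)$, the Taylor estimate gives $|S_2''(\theta)|\ge c|x_1-x_m|-M|\theta-\theta_c|$, and no \emph{fixed} choice of $\delta_0$ (in particular not $\delta_0\sim c/M$) makes the error term $M\delta_0$ smaller than $\tfrac{c}{2}|x_1-x_m|$ as $|x_1-x_m|\to 0$; the estimate fails in exactly the regime you call delicate. The missing ingredient is that $S_2\equiv 0$ on the caustic $x_1=x_m$ (there $\sigma(\theta)=x_1\cdot\hat\theta^\perp$ by \eqref{eq:bound_sigma} and $\varphi_2$ vanishes along that curve), so that writing $x_m=x_1+\rho\hat\alpha$ and Taylor-expanding in $\rho$ yields the sharper bound $|S_2'''(\theta)|\le C_3|x_1-x_m|$ uniformly in $\theta$ --- this is \eqref{eq:bound_Sppp} in the paper. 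With that bound the error term becomes $C_3\delta_0|x_1-x_m|$, and a fixed $\delta_0<c/(2C_3)$ closes the argument. Your final paragraph correctly identifies that all estimates must be phrased for the rescaled quantities $S_2^{(j)}/|x_1-x_m|$, but the concrete proof you give for \eqref{eq:bound_Spp} does not implement this and is not correct as written.

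For \eqref{eq:bound_Sp} your route differs from the paper's: you propose a soft compactness argument on $\psi=S_2'/|x_1-x_m|$ over the closed parameter set, whereas the paper works directly with the identity $|S_2'(\theta)|^2\propto f_0^2+f_c^2$ at $s=\sigma(\theta)$ (notation \eqref{eq:f0fc}) and bounds $|f_0|,|f_c|$ from below by an explicit case analysis on the sign of $(x_1-x_0)\cdot\hat\theta$, using $|(x_1-x_m)\cdot\hat\theta^\perp|\ge|x_1-x_m|\sin(\delta_0/2)$ and the interpolation $\sigma(\theta)=(\lambda x_1+(1-\lambda)x_m)\cdot\hat\theta^\perp$. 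Your approach can work and spares some explicit constants, but it is not free: you must actually verify that $\psi$ extends continuously across $\rho=0$ (with the direction $\hat\alpha$, i.e.\ $\theta_{1m}$, kept as a parameter) and that the limit is nonvanishing on $\{\min_{\theta_c\in\Theta_c}|\theta-\theta_c|\ge\delta_0/2\}$ uniformly in $x_1$ and in the position of $\sigma(\theta)$ between $x_1\cdot\hat\theta^\perp$ and $x_m\cdot\hat\theta^\perp$; that verification amounts to the same first-order computation the paper carries out explicitly. Note also that the lemma demands the bound on $\{\min|\theta-\theta_c|\ge\delta_0/2\}$, not $\ge\delta_0$, so your intermediate step of integrating $S_2''$ outward from $\theta_c$ over $[\delta_0/2,\delta_0]$ must be kept, and it relies on the corrected version of \eqref{eq:bound_Spp}.
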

    Estimate \eqref{eq:bound_Sp} will help us bound the integral $J_{2,0}$ away from the critical points, while estimate \eqref{eq:bound_Sp} will help us bound the contribution to $J_{2,0}$ in a neighborhood of the critical points. Lemma \ref{lemma:S} is relegated to appendix \ref{app:lemma}.
    Let $\delta_0$, $C_1$ and $C_2$ be given by lemma \ref{lemma:S}. Let $\rho_1(\theta)$ be a smooth function such that $\rho_1(\theta) = 0$ if $|\theta|\ge \delta_0$ and $\rho_1(\theta) = 1$ if $|\theta|\le \frac{\delta_0}{2}$. We now write the following partition of unity
    \begin{align*}
	1 = \rho_c(\theta) + \tilde \rho(\theta), \where\quad \rho_c(\theta) := \sum_{\theta_c\in\Theta_c} \rho_1(\theta-\theta_c),
    \end{align*}
    thus $\tilde\rho(\theta) = 0$ if $\min_{\theta_c\in\Theta_c} |\theta-\theta_c| \ge \frac{\delta_0}{2}$. Using this partition of unity, the integral $J_{2,0}$ can be written as
    \begin{align*}
	J_{2,0} &= e^{-i\frac{\pi}{4}} \left( \frac{2\pi}{\omega} \right)^\frac{1}{2} \left[ J_{2,0,1} + J_{2,0,2} \right], \where \\
	J_{2,0,1} &:= \int_{\S^1} e^{i\omega S_2(\theta)} \frac{f(\sigma(\theta),\theta)}{K_2(\theta)^\frac{1}{2}} \rho_c(\theta)\ d\theta, \qandq J_{2,0,2} := \int_{\S^1} e^{i\omega S_2(\theta)} \frac{f(\sigma(\theta),\theta)}{K_2(\theta)^\frac{1}{2}} \tilde\rho(\theta)\ d\theta.
    \end{align*}
    The integrand in  $J_{2,0,1}$ vanishes for $\min_{\theta_c\in\Theta_c} |\theta-\theta_c| \le \delta_0$ and so we can use estimate \eqref{eq:bound_Spp} together with \cite[Corollary 2]{AKC} to bound $J_{2,0,1}$:
    \begin{align}
	|J_{2,0,1}| \le \frac{12}{\sqrt{\omega C_2 |x_1-x_m|}} \left\| \pdr{}{\theta} \left[  \frac{f(\sigma(\theta),\theta)}{K_2(\theta)^\frac{1}{2}} \rho_c(\theta) \right] \right\|_{L^1(\Sone)} \le C_{201} \frac{b^3}{\sqrt{\omega|x_1-x_m|}}.
	\label{eq:bound_211}
    \end{align}
    Treating $J_{2,0,2}$ now, a first bound can be obtained easily 
    \begin{align}
	|J_{2,0,2}| \le \left\| \frac{f(\sigma(\theta),\theta)}{K_2(\theta)^\frac{1}{2}} \tilde\rho (\theta) \right\|_{L^1(\Sone)} \le C_{202} b^2.
	\label{eq:bound_212_1}
    \end{align}
    Then, using the fact that the integrand in $J_{2,0,2}$ vanishes for $\theta$ such that $\min_{\theta_c\in\Theta_c} |\theta-\theta_c|\ge \frac{\delta_0}{2}$, we can use the bound \eqref{eq:bound_Sp} to integrate by parts:
    \begin{align*}
	J_{2,0,2} = \frac{1}{i\omega} \int_{\S^1} \frac{d}{d\theta}\left[ e^{i\omega S_2(\theta)} \right] \frac{f(\sigma(\theta)) \tilde\rho(\theta)}{K_2(\theta)^\frac{1}{2} S_2'(\theta)}\ d\theta = \frac{-1}{i\omega} \int_{\S^1} e^{i\omega S_2(\theta)} \frac{d}{d\theta} \left[ \frac{f(\sigma(\theta)) \tilde\rho(\theta)}{K_2(\theta)^\frac{1}{2} S_2'(\theta)} \right]\ d\theta,
    \end{align*}
    whence the bound
    \begin{align}
	|J_{2,0,2}| \le \frac{1}{\omega C_1^2 |x_1-x_m|^2} \left\| S_2'(\theta)^2 \frac{d}{d\theta} \left[ \frac{f(\sigma(\theta)) \tilde\rho(\theta)}{K_2(\theta)^\frac{1}{2} S_2'(\theta)} \right] \right\|_{L^1(\Sone)} \le C_{202}' \frac{b^3}{\omega |x_1-x_m|^2}. 
	\label{eq:bound_212_2}
    \end{align}
    Taking the geometric average of bounds \eqref{eq:bound_212_1} and \eqref{eq:bound_212_2}, we get
    \begin{align}
	|J_{2,0,2}| \le (C_{202} C_{202}')^{\frac{1}{2}} \frac{b^{\frac{5}{2}}}{\omega^{\frac{1}{2}} |x_1-x_m|}.
	\label{eq:bound_212_3}
    \end{align}
    To conclude, if we define $\beta_1^\omega := e^{-i\frac{\pi}{4}}\left( \frac{2\pi}{\omega} \right)^{\frac{1}{2}} J_{2,0,1} + \int_{\Sone} I_{2,r}(\theta)\ d\theta$ and $\beta_2^\omega := e^{-i\frac{\pi}{4}}\left( \frac{2\pi}{\omega} \right)^{\frac{1}{2}} J_{2,0,2}$, we obtain the desired decomposition. Estimate \eqref{eq:bound_beta1} is obtained by using inequalities \eqref{eq:bound_211} and setting $C_{\beta,1}:= \sqrt{2\pi} C_{201} + 2\pi C_r$, while estimate \eqref{eq:bound_beta2} is obtained by using inequalities \eqref{eq:bound_212_1} and \eqref{eq:bound_212_3} and setting $C_{\beta,2}:= \sqrt{2\pi} \max(C_{202}, (C_{202}C_{202}')^\frac{1}{2})$.

\end{proof}

\appendix

\section{Formulas for proposition \ref{prop:singlescat_inversion_operator}}\label{app:phi1}
From equality 
\begin{align*}
    \varphi_1 = |x_c-x_0| - \sqrt{v^2 + |Px-x_0|^2} - \sqrt{v^2 + |Px-x_c|^2},
\end{align*}
we get that 
\begin{align*}
    \pdrr{\varphi_1}{v} = -\left[\frac{|Px-x_0|^2}{|x-x_0|^3} + \frac{|Px-x_c|^2}{|x-x_c|^3} \right].
\end{align*}
Thus we have 
\begin{align*}
    \frac{|Px-x_0|^2}{|x-x_0|^3} + \frac{|Px-x_c|^2}{|x-x_c|^3} &\ge \frac{1}{8r^3} (|Px-x_0|^2+|Px-x_c|^2) \ge \frac{(|Px-x_0|+|Px-x_c|)^2}{16r^3} \\
    &\ge \frac{|x_0-x_c|^2}{16r^3} = \frac{r^2-s^2}{4r^3} \ge \frac{D}{4r^2}, 
\end{align*}
and hence the lower bound on $|\partial_{vv}^2\varphi_1|$. The upper bound is given by
\begin{align*}
    \frac{|Px-x_0|^2}{|x-x_0|^3} + \frac{|Px-x_c|^2}{|x-x_c|^3} \le \frac{1}{|x-x_0|} + \frac{1}{|x-x_c|} \le \frac{2}{D}. 
\end{align*}

\section{Formulas for proposition \ref{prop:beta_bound}}\label{sec:appendix}

\subsection{Identities involving the boundary points $x_0$, $x_c$}
Concerning the boundary points in the Radon geometry, we have
\begin{align*}
    \pdr{x_0}{s} = \frac{r}{\sqrt{r^2-s^2}}\left( \sqrt{1-\frac{s^2}{r^2}} \hat\theta^\perp + \frac{s}{r}\hat\theta \right), &\quad \pdr{x_c}{s} = \frac{r}{\sqrt{r^2-s^2}}\left( \sqrt{1-\frac{s^2}{r^2}} \hat\theta^\perp - \frac{s}{r}\hat\theta \right), \\
    \pdr{x_0}{\theta} = -\sqrt{r^2-s^2} \pdr{x_0}{s}, &\quad \pdr{x_c}{\theta} = \sqrt{r^2-s^2} \pdr{x_0}{s}. 
\end{align*}
The second partial derivatives satisfy the relations:
\begin{align*}
    \pdrr{x_0}{\theta} = -x_0, &\quad \pdrr{x_0}{s} = \frac{r^2}{(r^2-s^2)^{\frac{3}{2}}}\ \hat\theta, \quad \pdrt{x_0}{s}{\theta} = \frac{1}{\sqrt{r^2-s^2}}\ x_0, \\
    \pdrr{x_c}{\theta} = -x_c, &\quad \pdrr{x_c}{s} = \frac{-r^2}{(r^2-s^2)^{\frac{3}{2}}}\ \hat\theta, \quad \pdrt{x_c}{s}{\theta} = \frac{-1}{\sqrt{r^2-s^2}}\ x_c.
\end{align*}

\subsection{Properties of $\varphi_2$} \label{subapp:dphi2}
Derivatives of the phase function: we have 
\begin{align*}
    \varphi_2 &= |x_0-x_c| - |x_0-x_1| - |x_m-x_c|, \\
    \partial_s\varphi_2 &= \frac{-r}{\sqrt{r^2-s^2}}\left( 2\frac{s}{r} + \widehat{x_0-x_1}\cdot\widehat{\partial_s x_0} + \widehat{x_c-x_m}\cdot\widehat{\partial_s x_c} \right) \\
    &= \frac{r}{\sqrt{r^2-s^2}}\left( (\widehat{x_1-x_0}- \hat\theta)\cdot\widehat{\partial_s x_0} - (\widehat{x_c-x_m}-\hat\theta)\cdot\widehat{\partial_s x_c} \right) \\
    \partial_\theta\varphi_2 &= r\left( \widehat{x_0-x_1}\cdot\widehat{\partial_\theta x_0} + \widehat{x_c-x_m}\cdot\widehat{\partial_\theta x_c} \right) \\
    &= r\left(  (\widehat{x_1-x_0}- \hat\theta)\cdot\widehat{\partial_s x_0} + (\widehat{x_c-x_m}-\hat\theta)\cdot\widehat{\partial_s x_c} \right).
\end{align*}
In order to compute the second derivatives, we use the following identity, where $x$ depends on $(a,b)$ and $y$ is a constant vector:
\begin{align*}
    \pdrt{}{a}{b} |x-y| = \widehat{x-y}\cdot\pdrt{x}{a}{b} + \frac{1}{|x-y|}\left( \pdr{x}{a}\cdot\pdr{x}{b} - \left( \widehat{x-y}\cdot\pdr{x}{a} \right)\left( \widehat{x-y}\cdot\pdr{x}{b} \right) \right).
\end{align*} 
We obtain
\begin{align*}
    \pdrr{\varphi_2}{s} &= \frac{- r^2}{(r^2-s^2)^\frac{3}{2}}\left[ 2+(\widehat{x_0-x_1}-\widehat{x_c-x_m})\cdot\hat\theta + \sqrt{r^2-s^2}( f_0 + f_c) \right] \\
    \pdrt{\varphi_2}{s}{\theta} &= \frac{1}{\sqrt{r^2-s^2}} \left[ -\widehat{x_0-x_1}\cdot x_0 + \widehat{x_c-x_m}\cdot x_c + r^2 ( f_0 - f_c ) \right] \\
    \pdrr{\varphi_2}{\theta} &= \widehat{x_0-x_1}\cdot x_0 + \widehat{x_c-x_m}\cdot x_c - r^2 ( f_0 + f_c ),
\end{align*}
where we have defined 
\begin{align*}
    f_0 := \frac{1-(\widehat{x_0-x_1}\cdot\widehat{\partial_s x_0})^2}{|x_1-x_0|} \qandq f_c := \frac{1-(\widehat{x_c-x_m}\cdot\widehat{\partial_s x_c})^2}{|x_c-x_m|}.
\end{align*}
Since $x_0$ (resp. $x_c$) is orthogonal to $\partial_s x_0$ (resp. $\partial_s x_c$), we can rewrite $f_0$ and $f_c$ as 
\begin{align*}
    f_0 := \frac{1}{r^2} \frac{(\widehat{x_0-x_1}\cdot x_0)^2}{|x_1-x_0|} \qandq f_c := \frac{1}{r^2} \frac{(\widehat{x_c-x_m}\cdot x_c)^2}{|x_c-x_m|}.
\end{align*}

In order to show estimate \eqref{eq:bound_phi_ss}, the fact that $x_1$ and $x_m$ belong to $B_{r-2D}$ yields the bounds
\begin{align*}
    \min(\widehat{x_0-x_1}\cdot\widehat{x_0}, \widehat{x_c-x_m}\cdot\widehat{x_c}) \ge \frac{\sqrt{2D(2r-2D)}}{r} \ge \sqrt{\frac{2D}{r}}.
\end{align*}
Together with the obvious fact that $2+(\widehat{x_0-x_1}-\widehat{x_c-x_m})\cdot\hat\theta\ge 0$, we obtain that
\begin{align*}
    \pdrr{\varphi_2}{s} \le \frac{-r^2}{r^2-s^2} \left( \frac{1}{|x_1-x_0|}+\frac{1}{|x_c-x_m|} \right) \frac{2D}{r} \le -\frac{2D}{r^2}, 
\end{align*}
and hence the upper bound in estimate \eqref{eq:bound_phi_ss} holds. The lower bound is straightforward if we use the following inequalities $r^2-s^2\ge rD$, $|x_1-x_0|\ge D$ and $|x_m-x_c|\ge D$.

\subsection{Determinant of $H_{\varphi_2}$ at a point $(\sigma(\theta), \theta)$}
This appendix is useful in order to compute $S_2''$ at the critical points. The condition $\partial_s\varphi_2(s,\theta) = 0$ that defines $\sigma(\theta)$ can be rewritten as 
\begin{align*}
    s(2 + (\widehat{x_0-x_1}-\widehat{x_c-x_m})\cdot\hat\theta) + \sqrt{r^2-s^2} (\widehat{x_0-x_1}+\widehat{x_c-x_m})\cdot\hat\theta^\perp = 0, 
\end{align*}
which in turn allows us to obtain the relation
\begin{align}
    2 + (\widehat{x_0-x_1}-\widehat{x_c-x_m})\cdot\hat\theta = \frac{\sqrt{r^2-s^2}}{r^2}\left( 2\sqrt{r^2-s^2} - \widehat{x_0-x_1}\cdot x_0 - \widehat{x_c-x_m}\cdot x_c \right). 
    \label{eq:property}
\end{align}
Defining 
\begin{align*}
    g_0 &:= \widehat{x_0-x_1}\cdot x_0 - r^2 f_0 = \widehat{x_0-x_1}\cdot x_0 \left( 1 - \frac{\widehat{x_0-x_1}\cdot x_0}{|x_0-x_1|} \right) = - \frac{(\widehat{x_0-x_1}\cdot x_0) (\widehat{x_0-x_1}\cdot x_1)}{|x_1-x_0|}, \\
    g_c &:= \widehat{x_c-x_m}\cdot x_c - r^2 f_c = \widehat{x_c-x_m}\cdot x_c \left( 1 - \frac{\widehat{x_c-x_m}\cdot x_c}{|x_c-x_m|} \right) = -\frac{(\widehat{x_c-x_m}\cdot x_c) (\widehat{x_c-x_m}\cdot x_m)}{|x_c-x_m|},
\end{align*}
and using relation \eqref{eq:property}, we can write the second partials of $\varphi$ in the following compact way:
\begin{align*}
    \pdrr{\varphi_2}{s} = \frac{1}{r^2-s^2} \left[ g_0 + g_c - 2\sqrt{r^2-s^2} \right], \qquad \pdrr{\varphi_2}{\theta} = g_0 + g_c, \qquad \pdrt{\varphi_2}{s}{\theta} = \frac{1}{\sqrt{r^2-s^2}}( g_c-g_0 ),
\end{align*}
thus the expression of the determinant of the hessian at a point $(\sigma(\theta),\theta)$ takes the form:
\begin{align*}
    \det H_{\varphi_2} &= \frac{2}{r^2-s^2} \left( 2 g_0 g_c - \sqrt{r^2-s^2} (g_0 + g_c) \right) = \left( \frac{2g_0}{\sqrt{r^2-s^2}} - 1 \right)\left( \frac{2g_c}{\sqrt{r^2-s^2}}-1 \right)-1.
\end{align*}
This expression is zero whenever $x_1=x_m$. 

\subsection{Proof of lemma \ref{lemma:S}} \label{app:lemma}

We first claim that there is a constant such $C_3$ such that,
\begin{align} 
    |S_2'''(\theta)| \le C_3 |x_1-x_m|, \quad \theta\in\Sone.
    \label{eq:bound_Sppp}
\end{align}
Indeed, the function $(\theta, x_1, \rho, \alpha)\mapsto S_2'''(\theta, x_1, x_1+\rho\hat\alpha)$ is smooth with respect to all its arguments, and such that $\lim_{\rho\to 0} S_2''' = 0$, so we have that
\begin{align*}
    S_2'''(\theta, x_1, x_1+\rho\hat\alpha) = \rho\int_0^1 \pdr{S_2'''}{\rho}(\theta, x_1, x_1 + u\rho\hat\alpha)\ du,
\end{align*}
and since $\pdr{S_2'''}{\rho}$ is continuous on a compact set, it is uniformly bounded by some constant $C_3$, hence the estimate \eqref{eq:bound_Sppp}. 
Using a Taylor expansion of $S_2''(\theta)$ about $\theta_c \in\Theta_c$, we thus have:
\begin{align*}
    |S_2''(\theta) - S_2''(\theta_c)| \le |(\theta-\theta_c)| \int_0^1 |S_2'''(u\theta + (1-u)\theta_c)|\ du \le |\theta-\theta_c| C_3 |x_1-x_m|.
\end{align*}
Hence for $|\theta-\theta_c|\le \frac{|S_2''(\theta_c)|}{2 C_3 |x_1-x_m|}$, we have that
\begin{align*}
    |S_2''(\theta)|\ge \frac{1}{2}|S_2''(\theta_c)| = \frac{1}{2} \frac{|x_0-x_c|}{|x_1-x_0|+|x_c-x_m|} |x_1-x_m| \ge \frac{\sqrt{2rD}}{8r} |x_1-x_m|.
\end{align*}
Hence, if we define $\delta_0 = \frac{1}{2C_3} \frac{\sqrt{2rD}}{4r}\le \frac{1}{2C_3} \min_{\theta_c \in \Theta_c} \frac{|S_2''(\theta_c)|}{|x_1-x_m|}$, condition \eqref{eq:bound_Spp} is fulfilled with $C_2 := \frac{1}{8}\sqrt{\frac{2D}{r}}$.

We now focus on proving inequality \eqref{eq:bound_Sp}. Let us define the function 
\begin{align*}
    G(s,\theta) = (r^2-s^2) (\partial_s\varphi_2)^2 + (\partial_\theta \varphi_2)^2 = f_0^2 + f_c^2, 
\end{align*}
and notice that $|S_2'(\theta)| = G(\sigma(\theta), \theta)^{\frac{1}{2}}$. Let us fix $\theta$ such that $\min_{\theta_c\in\Theta_c} |\theta-\theta_c| \ge \frac{\delta_0}{2}$. By virtue of bounds \eqref{eq:bound_sigma}, there exists $\lambda\in[0,1]$ such that $\sigma(\theta) = (\lambda x_1 + (1-\lambda) x_m)\cdot\hat\theta^\perp$, so that 
\begin{align*}
    x_1-x_0 &= [(1-\lambda)(x_1-x_m)\cdot\hat\theta^\perp]\hat\theta^\perp + [\sqrt{r^2-\sigma^2} + x_1\cdot\hat\theta]\hat\theta, \\
    x_c-x_m &= [\lambda(x_1-x_m)\cdot\hat\theta^\perp]\hat\theta^\perp + [\sqrt{r^2-\sigma^2} - x_m\cdot\hat\theta]\hat\theta.
\end{align*}
Treating $f_0$ first, we split cases: if $(x_1-x_0)\cdot\hat\theta\le 0$, then necessarily $|\sigma(\theta)|\ge D$, and $\sigma(\theta)$ and $(x_1-x_0)\cdot\hat\theta^\perp$ have opposite signs. Hence,
\begin{align*}
    |f_0| = (1-\widehat{x_1-x_0}\cdot\hat\theta)\frac{|\sigma|}{r} + |\widehat{x_1-x_0}\cdot\hat\theta^\perp|\sqrt{1-\frac{\sigma^2}{r^2}} \ge \frac{D}{r}. 
\end{align*}
Now if $(x_1-x_0)\cdot\hat\theta\ge 0$, we have
\begin{align*}
    |f_0| &= \frac{1}{|x_1-x_0|} \left|(x_1-x_0-|x_1-x_0|\hat\theta)\cdot\widehat{\partial_s x_0}\right| = \frac{(1-\lambda) |(x_1-x_m)\cdot\hat\theta^\perp|}{|x_1-x_0|} \left| \sqrt{1-\frac{\sigma^2}{r^2}} - \gamma \frac{\sigma}{r} \right|,
\end{align*}
where
\begin{align*}
    \gamma &:= \frac{|x_1-x_0|-(x_1-x_0)\cdot\hat\theta}{(x_1-x_0)\cdot\hat\theta^\perp} = \frac{(x_1-x_0)\cdot\hat\theta^\perp}{|x_1-x_0|+(x_1-x_0)\cdot\hat\theta}\le 1.
\end{align*}
Hence, if $\sigma\le 0$, then 
\begin{align*}
    \sqrt{1-\frac{\sigma^2}{r^2}} - \gamma \frac{\sigma}{r} \ge \sqrt{1-\frac{\sigma^2}{r^2}} \ge \sqrt{\frac{D}{r}},
\end{align*}
and if $\sigma\ge 0$, 
\begin{align*}
    \sqrt{1-\frac{\sigma^2}{r^2}} - \gamma \frac{\sigma}{r} = \frac{1}{\sqrt{1-\frac{\sigma^2}{r^2}} + \frac{\sigma}{r}} + (1-\gamma) \frac{\sigma}{r} \ge \frac{1}{2}.
\end{align*}
Finally, we have that 
\begin{align*}
    |(x_1-x_m)\cdot\hat\theta^\perp| = |x_1-x_m||\sin(\theta_{1m}-\theta)| \ge |x_1-x_m|\sin\frac{\delta_0}{2}.
\end{align*}
In summary, denoting $C := \min \left( \frac{1}{2}, \sqrt{\frac{D}{r}} \right) \sin\frac{\delta_0}{2}$, we obtain the bound
\begin{align*}
    |f_0| \ge \min\left( \frac{D}{r}, (1-\lambda) |x_1-x_m| C \right). 
\end{align*}
Likewise, we obtain the bound 
\begin{align*}
    |f_c| \ge \min\left( \frac{D}{r}, \lambda |x_1-x_m| C \right),
\end{align*}
which implies that 
\begin{align*}
    |S_2'(\theta)| = \sqrt{f_0^2 + f_c^2} \ge \min\left( \frac{D}{r}, \frac{C}{\sqrt{2}}|x_1-x_m| \right), 
\end{align*}
where we used the inequality $\lambda^2 + (1-\lambda)^2 \ge \frac{1}{2}$ for every $\lambda\in [0,1]$. Lemma \ref{lemma:S} is proved.    

\begin{remark}
    Although we do not know the function $S_2$ explicitly, we know that we can not expect a uniform bound of the form 
    \begin{align*}
	|S_2''(\theta)| \ge C |x_1-x_m|, \quad\theta\in\Sone,
    \end{align*}
    (which in turn would give us a less singular bound of the form $|\beta^\omega|\le C |x_1-x_m|^{-\frac{1}{2}}$ by virtue of \cite[Corollary 2]{AKC}). Indeed, since $S_2'$ has two zeros and is periodic, its derivative has at least two zeros. In addition, away from the critical points, as $|x_1-x_m|\to 0$, $S_2'$ cannot decay to zero slower than $|x_1-x_m|$. This is because we can obtain an upper bound for $|S_2'|$ similar to \eqref{eq:bound_Sppp} for $|S_2'''|$. This in turn guarantees that $|S_2'|\to 0$ at least as fast as $|x_1-x_m|$ does. In that sense, lemma \ref{lemma:S} cannot be improved.
\end{remark}

\section*{Acknowledgment} This work was partially funded by the NSF under Grant DMS-0804696.

\end{document}